\DeclarePairedDelimiter{\abss}{\bigg\lvert}{\bigg\rvert}
\newtheorem*{rep@theorem}{\rep@title}
\newcommand{\newreptheorem}[2]{%
	\newenvironment{rep#1}[1]{%
		\def\rep@title{#2 \ref{##1}}%
		\begin{rep@theorem}}%
		{\end{rep@theorem}}}
\theoremstyle{definition}
\newtheorem{rmk}{Remark}[section]
\newtheorem{definition}{Definition}[section]
\theoremstyle{plain}
\newtheorem{theorem}{Theorem}[section] 
\newtheorem{prop}{Proposition}[section] 
\newcommand{\ds}{\displaystyle}
\newcommand{\partialt}[1]{\dfrac{\partial#1}{\partial t}}
\newcommand{\partialx}[1]{\dfrac{\partial#1}{\partial x}}
\newcommand{\partialv}[1]{\dfrac{\partial#1}{\partial v}}
\DeclareMathAlphabet{\mathup}{OT1}{\familydefault}{m}{n}
\newcommand{\dx}[1]{\mathop{}\!\mathup{d} #1}
\newcommand{\ddt}{{\frac{\dx{}}{\dx t}}}
\newcommand{\dxv}{\dx x\dx v}
\newcommand{\dxvt}{\dx x\dx v\dx t}
\newcommand{\dtxv}{\dx t\dx x\dx v}
\newcommand{\upsp}{\brk{\prt{\Upsilon_p}\i\n}}
\newcommand{\upsrhoh}{\prt{\Upsilon_f}_h}
\newcommand{\upsetah}{\prt{\Upsilon_g}_h}
\newcommand{\into}{\int_\Omega}
\DeclarePairedDelimiter{\prt}{(}{)}
\DeclarePairedDelimiter{\brk}{[}{]}
\DeclarePairedDelimiter{\abs}{|}{|}
\DeclarePairedDelimiter{\norm}{\|}{\|}
\DeclarePairedDelimiter{\set}{\{}{\}}
\newcommand{\n}{^{n}}
\newcommand{\nhalf}{1/2}
\renewcommand{\i}{_{i}}
\newcommand{\ip}{_{i+1}}
\newcommand{\im}{_{i-1}}
\newcommand{\ih}{_{i+\nhalf}}
\newcommand{\imh}{_{i-\nhalf}}\newcommand{\np}{^{n+1}}
\renewcommand{\j}{_{j}}
\newcommand{\jp}{_{j+1}}
\newcommand{\jm}{_{j-1}}
\newcommand{\jh}{_{j+\nhalf}}
\newcommand{\jmh}{_{j-\nhalf}}
\renewcommand{\ij}{_{i,j}}
\newcommand{\ijp}{_{i,j+1}}
\newcommand{\ijm}{_{i,j-1}}
\newcommand{\ipj}{_{i+1,j}}
\newcommand{\imj}{_{i-1,j}}
\newcommand{\lone}{^{\lambda_1}}
\newcommand{\ltwo}{^{\lambda_2}}
\newcommand{\pii}{K_{11}}
\newcommand{\pij}{K_{12}}
\newcommand{\pjj}{K_{22}}
\newcommand{\pji}{K_{21}}
\newcommand{\intt}{\int_{t\n}^{t\np}}
\newcommand{\intx}{\int_{x\imh}^{x\ih}}
\newcommand{\intv}{\int_{v\jmh}^{v\jh}}
\newcommand{\Cij}{{\abs{C_{i,j}}}}
\newcommand{\Cijn}{{\abs{C_{i,j}^n}}}
\newcommand{\Fx}{{\prescript{x}{}{\!\bar F}}}
\newcommand{\Fv}{{\prescript{v}{}{\!\bar F}}}
\newcommand{\Gx}{{\prescript{x}{}{\!\bar G}}}
\newcommand{\Gv}{{\prescript{v}{}{\!\bar G}}}
\newcommand{\Px}{{\prescript{x}{}{\!\bar P}}}
\newcommand{\Pv}{{\prescript{v}{}{\!\bar P}}}
\newcommand{\R}{\mathbb{R}}
\newcommand{\N}{\mathbb{N}}
\newcommand{\Z}{\mathbb{Z}}
\newcommand{\I}{\mathcal{I}}
\newcommand{\J}{\mathcal{J}}
\newcommand{\C}{\mathcal{C_W}}
\begin{document}
\title{A Convergent Finite Volume Method for a Kinetic Model for Interacting Species}
\author{Julia I.M.~Hauser, Valeria Iorio, Markus Schmidtchen}

\address[Julia I.M.~Hauser]{\newline Institute of Scientific Computing, Technische Universit\"at Dresden, Zellescher Weg 12-14, 01069
Dresden, Germany}
\email{julia.hauser@tu-dresden.de}
\address[Valeria Iorio]{\newline DISIM - Department of Information Engineering, Computer Science and Mathematics, University
of L'Aquila, Via Vetoio 1 (Coppito), 67100 L'Aquila, Italy}
\email{valeria.iorio1@graduate.univaq.it}
\address[Markus Schmidtchen]{\newline Institute of Scientific Computing, Technische Universit\"at Dresden, Zellescher Weg 12-14, 01069
Dresden, Germany}
\email{markus.schmidtchen@tu-dresden.de}

\keywords{Finite volume method, kinetic system, convergence of the scheme, system with many species, kinetic model} 
\subjclass[2020]{35A99, 35A35, 65M08, 65M12}

\maketitle

\section*{Abstract}
We propose an upwind finite volume method for a system of two kinetic equations in one dimension that are coupled through nonlocal interaction terms. These cross-interaction systems were recently obtained as the mean-field limit of a second-order system of ordinary differential equations for two interacting species. Models of this kind are encountered in a myriad of contexts, for instance, to describe large systems of indistinguishable agents such as cell colonies, flocks of birds, schools of fish, herds of sheep. The finite volume method we propose is constructed to conserve mass and preserve positivity. Moreover, convex functionals of the discrete solution are controlled, which we use to show the convergence of the scheme. Finally, we investigate the scheme numerically.


\section{Introduction}
Models for collective behaviour have gained popularity in describing emergent phenomena in numerous fields such as social sciences ~\cite{toscani}, pedestrian flows~\cite{appertrolland}, traffic flow, ~\cite{klar_zanella}, and biology, see \cite{topaz_bertozzi_lewis, mogilner_eldestein, ccr}, and references therein. In particular, biological applications are often devoted to understanding the formation of patterns and self-organisation observed in nature, for instance, in swarms, schools of fish, and flocks of birds, cf.\ \cite{topaz_bertozzi}. 

The focus of this work is to present a finite volume scheme to study the following coupled two-species Vlasov-type system
\begin{align}
    \label{eq:main_system}
    \left\{
    \begin{array}{rl} 
    \ds \frac{\partial f}{\partial t}+v \frac{\partial f}{\partial x} &= \ds (\pii'\ast\rho +\pij' \ast \eta)\frac{\partial f}{\partial v}, \\[1em]
    \ds \frac{\partial g}{\partial t}+v \frac{\partial g}{\partial x} &= \ds ( \pjj' \ast\eta + \pji' \ast \rho)\frac{\partial g}{\partial v},
    \end{array}
    \right.
\end{align}
equipped with some non-negative initial datum $(f_0, g_0)\in L^1(\R \times \R)^2$, i.e.,
\[
	f(0,x,v)=f_0(x,v),\quad\text{and}\quad g(0,x,v)=g_0(x,v).
\]
Here, $(f,g)$ is a pair of phase-space densities describing the distribution of the two species on the domain $\brk{0,T}\times\R\times\R$. The potentials $\pii$ and $\pjj$ encode the self-interactions or intraspecific interactions, while $\pij$ and $\pji$ model the cross-interactions or interspecific interactions. Moreover,  $\rho$ and $\eta$ denote the associated macroscopic population densities, i.e., 
\[
    \rho (t,x)=\int_\R f(t,x,v)\dx{v}, \quad \text{and}\quad  \eta (t,x)=\int _\R g(t,x,v)\dx{v}.
\]
In \cite{dobrushin1979vlasov, neunzert2006introduction, braun1977vlasov}, the Vlasov equation was obtained as a many-particle limit. Recently, a lot of progress has been made for singular interactions, see \cite{hauray2007n, hauray2014particles, lazarovici2017mean} and the review article \cite{jabin2014review}. In the context of swarming particles (for instance, in the Cucker-Smale model \cite{cucker2007emergent} or the D'Orsogna-Chuang-Bertozzi-Chayes model \cite{d2006self}), Vlasov-type kinetic equations were obtained and studied in \cite{ccr, ha2008particle,carrillo2014derivation}. A good survey of second order particle systems in the context of a single interacting species can be found in \cite{carrillo2010particle} and references therein, and we refer to \cite{golse2016dynamics} for a great overview of mean-field limits in various contexts. More recently, the particle dynamics could be linked to macroscopic equations such as pressureless Euler-type equations, cf.\ \cite{carrillo2021mean}.  

Due to the transport nature of the equation, a popular way to treat the equation numerically is a semi-Lagrangian approach, which relies on splitting the operator into a transport and an interaction part, cf.\ \cite{crouseilles2009forward, charles2013enhanced, carrillo2014single} and references therein. Another important class of numerical approximations using the underlying transport structure are particle methods, cf.\ \cite{pinto2016uniform, pinto2014noiseless, cottet1984particle, SMAI-JCM_2018__4__121_0, wollman2000approximation}. Let us also refer to \cite{filbet_xiong, bessemoulin2022stability} for a spectral method, \cite{ayuso2009discontinuous, de2012discontinuous} discontinuous Galerkin method, and \cite{FILBET2001} for a type of flux-balanced method for Vlasov equations. A somewhat different approach to discretising the Vlasov-Poisson system consists of the class of finite-volume methods, cf.\ \cite{filbet} for a convergent scheme in one dimension.  For a survey of numerical approximations of the Vlasov equation, and kinetic equations in general,  we refer to \cite{filbet2003numerical} and \cite{dimarco2014numerical}, respectively. For a general survey of finite-volume methods we refer to \cite{barth2003finite, eymard2000finite}.

While models for one-species cases have been extensively studied, recently systems of multiple interacting species have received a lot of attention. In the case of overdamped Langevin dynamics, these nonlocal cross-interaction systems of partial differential equations were obtained by mean-field limits in \cite{difrancesco_fagioli} exploiting the gradient flow structure. In \cite{el2020finite}, the authors propose a finite volume method for a first order system of nonlocally interacting species proposed in \cite{difrancesco_fagioli}, and in \cite{carrillo_filbet_sch, carrillo_huang_sch} a finite volume method for nonlocally interacting species with size-exclusion effects via cross-diffusion was proposed. The derivation of its kinetic counterpart, i.e., \eqref{eq:main_system}, for two interaction species has been obtained as a many-particle limit, very recently, in \cite{CFI2022}, see also \cite{fetecau2015first}, for a one-species zero-inertia limit.

The existence theory for system \eqref{eq:main_system} is studied in arbitrary dimension in \cite{CFI2022}, where the authors prove the existence and uniqueness of a measure solution of the following $N$-species system
\[
	\partialt{f\i} + v\cdot \nabla _x f\i = \frac{1}{\varepsilon} \nabla_v \cdot \prt{vf\i} + \frac{1}{\varepsilon} \bigg( \sum_{j=1}^N \nabla K_{ij} \ast \rho\j\bigg)\cdot \nabla_v f\i,
\]
under some regularity assumptions on the interaction kernels. In the model above, where $f_i$ denotes the density of species $i$ as $i=1,\ldots,N$, and the parameter $\varepsilon>0$ represents the inertia, i.e., a small response time of individuals is considered.

In this paper, we propose a finite volume method for the two-species cross-interaction system \eqref{eq:main_system} and study its convergence. Subsequently, we investigate system \eqref{eq:main_system} numerically and study the numerical order of convergence.  Before constructing the numerical scheme and studying its properties, let us present some formal properties of solutions of cross-interaction system \eqref{eq:main_system} at the continuous level. 
For convenience, we shall, henceforth, use the notation
\[
    \Upsilon_f (t,x)\coloneqq \pii' \ast\rho + \pij' \ast\eta, \quad\text{and}\quad
    \Upsilon_g (t,x) \coloneqq \pjj' \ast\eta + \pji' \ast\rho.
\]

Now, we show that the solutions at the continuous level are bounded and positive. Indeed, for any smooth function $\phi\colon \R\to\R$, and $p\in \{f,g \}$, a straightforward computations shows
\begin{align*}
    \ddt \iint_{\R\times\R} \phi \circ p \dx x \dx v 
    &= \iint_{\R\times\R} \phi '(p)\partial _t p  \dxv \\ 
    &= \iint_{\R\times\R} \phi '(p) \prt*{ -v \frac{\partial p}{\partial x} + \Upsilon_p \frac{\partial p}{\partial v}} \dx x\dx v \\ 
    &= \iint_{\R\times\R} \begin{pmatrix} -v \\ \Upsilon_p \end{pmatrix} \cdot \nabla _{(x,v)} \phi (p) \dx x\dx v\\
    &=0.
\end{align*}
We introduce the notation
\[
	[x]^+ \coloneqq \max\{x, 0\}, \quad \text{and}\quad [x]^- \coloneqq - \min\{x, 0\}
\]
for the positive part and the negative part, respectively, of a real number $x$. Using $\phi(s) = [s]^-$, we observe that solutions to system \eqref{eq:main_system} remain non-negative if they were non-negative initially. Furthermore, if $\phi (s)=\brk{s- \norm{p_0}_{L^\infty}}^+$, with $p_0 \in \{f_0, g_0\}$, we obtain that the solution is bounded at the continuous level. Moreover, considering $\phi (s)=\abs{s}^q$, we see that the $L^q$-norms of the solution are controlled, a key ingredient mimicked on the numerical level in this paper.

\subsection{Structure of the paper}
This paper is organised as follows. In Section \ref{sec:num_meth}, we propose an upwind finite volume scheme for system \eqref{eq:main_system}. We introduce our notion of weak solutions to the system and present the statement of the main theorem. Section \ref{sec:Properties} is dedicated to proving that the numerical solutions remain non-negative and bounded under a suitable time step constraint. Hence, the approximated solution preserves the properties of the continuous solution. Section \ref{sec:Convergence} is devoted to the proof of the main result, the convergence of the numerical approximation to the weak solution of system \eqref{eq:main_system}. Finally, in Section \ref{sec:experiment}, we take a look at the convergence rates in numerical examples and compare them with the estimates obtained analytically.

\section{Derivation of the numerical method}
\label{sec:num_meth}
In this section, we shall derive a finite volume scheme to approximate the solutions to system \eqref{eq:main_system} on the domain $Q_T \coloneqq \prt{0,T}\times \prt{-L,L}\times\R$, equipped with periodic boundary conditions in physical space. Throughout this paper, we will use the following domains $Q\coloneqq \prt{-L,L}\times\R $, $\Omega_T \coloneqq (0,T)\times (-L,L)$ and $\Omega \coloneqq (-L,L)$.
\subsection{Numerical mesh}
We discretise the phase space by introducing cells
\[ 
	C_{i,j} = (x_{i-1/2},x_{i+1/2})\times (v_{j-1/2},v_{j+1/2}), 
\]
for $(i,j)\in \I\times \mathbb{Z}$, where $\I = \{0, \ldots, N_x - 1\}$. Here, $(x_{i-1/2})_{i\in \{0, \ldots, N_x\}}$ is a strictly increasing family of interfaces with $x_{-1/2} = -L$ and $x_{N_x-1/2} = L$. Similarly, $(v_{j-1/2})_{j\in \mathbb{Z}}$ denotes a strictly increasing sequence in $\R$, with $v_{j+1/2} \to \pm \infty$, as $j\to\pm \infty$.  We denote by \\
\begin{minipage}{.55\textwidth}
 $\Delta x_i=x_{i+1/2}-x_{i-1/2}$, for $i\in \I=\{0,\ldots,N_x-1\}$, the width of the spatial interval $(x\imh,x\ih)$. Additionally, we set $\Delta v_j=v_{j+1/2}-v_{j-1/2}$, for $j\in\mathbb{Z}$, to denote the width of the velocity interval $(v\jmh,v\jh)$. We associate with the mesh the parameter $h$ as the maximum of all space and velocity steps, i.e.,
	\[ 
	h \coloneqq \ds{ \max _{i\in \I,j\in \mathbb{Z}} \{ \Delta x_i, \Delta v_j \}>0.}
	\]
\end{minipage}\hspace*{.4cm}
\begin{minipage}{.40\textwidth}
	\vspace*{-.2cm}\begin{center}
		\scalebox{1.4}{
		\begin{tikzpicture}
		\coordinate (N) at (0,0);
		\coordinate (A) at (2.5,0);
		\coordinate (B) at (0,1.8);
		\coordinate (C)	at (-0.8,0);
		\coordinate (D) at (0,-0.8);
		
		\coordinate (A1) at (0.2,0);
		\coordinate (A1n) at (0.2,-0.1);
		\coordinate (A1p) at (0.2,0.1);
		\coordinate (A2) at (0.7,0);
		\coordinate (A2n) at (0.7,-0.1);
		\coordinate (A2p) at (0.7,0.1);
		\coordinate (A3) at (1.7,0);
		\coordinate (A3n) at (1.7,-0.1);
		\coordinate (A3p) at (1.7,0.1);
		\coordinate (A4) at (1.95,0);
		\coordinate (A4n) at (1.95,-0.1);
		\coordinate (A4p) at (1.95,0.1);
		
		\coordinate (B1) at (0,0.25);
		\coordinate (B1n) at (-0.1,0.25);
		\coordinate (B1p) at (0.1,0.25);
		\coordinate (B2) at (0,1);
		\coordinate (B2n) at (-0.1,1);
		\coordinate (B2p) at (0.1,1);
		\coordinate (B3) at (0,1.2);
		\coordinate (B3n) at (-0.1,1.2);
		\coordinate (B3p) at (0.1,1.2);
		\coordinate (B4) at (0,1.6);
		
		\coordinate (C1) at (-0.2,0);
		\coordinate (C1n) at (-0.2,-0.1);
		\coordinate (C1p) at (-0.2,0.1);
		\coordinate (C2) at (-0.45,0);
		
		\coordinate (D1) at (0,-0.5);
		\coordinate (D1n) at (-0.1,-0.5);
		\coordinate (D1p) at (0.1,-0.5);
		\coordinate (D2) at (0,-0.80);

		\coordinate (AB14) at (2.03,0.25);
		\coordinate (AB24) at (2.03,1);
		\coordinate (AB34) at (1.95,1.2);
		\coordinate (AB41) at (0.2,1.28);
		\coordinate (AB42) at (0.7,1.28);
		\coordinate (AB43) at (1.7,1.28);
		
		\coordinate (AD1) at (0.2,-0.62);
		\coordinate (AD12) at (0.7,-0.62);
		\coordinate (AD13) at (1.7,-0.62);
		\coordinate (AD14) at (1.95,-0.62);
		\coordinate (AD14b) at (2.03,-0.5);
		
		\coordinate (CB12) at (-0.45,0.25);			
		\coordinate (CB22) at (-0.45,1);
		\coordinate (CB31) at (-0.2,1.2);
		\coordinate (CB32) at (-0.53,1.2);
		\coordinate (CB32b) at (-0.45,1.28);
		
		\coordinate (CD1) at (-0.2,-0.62);
		\coordinate (CD12b) at (-0.53,-0.5);
		\coordinate (CD12) at (-0.45,-0.62);
		
		\draw[->] (N)--(A);
		\draw[->] (N)--(B);
		\draw[->] (N)--(C);
		\draw[->] (N)--(D);		
		
		\draw (A1n)--(A1p);	
		\draw (A2n)--(A2p);	
		\draw (A3n)--(A3p);	
		\draw (A4n)--(A4p);	
		
		\draw (B1n)--(B1p);	
		\draw (B2n)--(B2p);	
		\draw (B3n)--(B3p);	
		\draw (C1n)--(C1p);	
		\draw (D1n)--(D1p);	
		
		\draw[dotted] (A4)--(AB34);			
		\draw[dotted] (A1)--(AB41);
		\draw[dotted] (A3)--(AB43);
		\draw[dotted] (A2)--(AB42);
		
		\draw[dotted] (B1)--(AB14);
		\draw[dotted] (B2)--(AB24);
		\draw[dotted] (B3)--(AB34);
		
		\draw[dotted] (C1)--(CB31);	
		\draw[dotted] (C2)--(CB32b);		
		\draw[dotted] (B3)--(CB32);	
		
		\draw[dotted] (C1)--(CD1);	
		\draw[dotted] (C2)--(CD12);	
		\draw[dotted] (D1)--(CD12b);	
		
		\draw[dotted] (A4)--(AD14);			
		\draw[dotted] (A1)--(AD1);
		\draw[dotted] (A2)--(AD12);
		\draw[dotted] (A3)--(AD13);
		\draw[dotted] (D1)--(AD14b);
		
		\fill[black]  (A) circle [radius=0pt] node[right] {\footnotesize$x$}; 
		\fill[black]  (B) circle [radius=0pt] node[right] {\footnotesize$v$}; 
		\fill[black]  (N) circle [radius=1pt];
		\fill[black]  (0.1,0.05) node[below left] {\footnotesize\textbf{$0$}}; 
		\fill[black]  (1.2,0.3) circle [radius=0pt] node[above] {\footnotesize$C_{i,j}$}; 
		
		\fill[black]  (A2) circle [radius=0pt] node[below] {\tiny$x_{i-1/2}$}; 
		\fill[black]  (A3) circle [radius=0pt] node[below] {\tiny$x_{i+1/2}$}; 
		\fill[black]  (B1) circle [radius=0pt] node[left] {\tiny$v_{j-1/2}$}; 
		\fill[black]  (B2) circle [radius=0pt] node[left] {\tiny$v_{j+1/2}$}; 
		\end{tikzpicture}
	}
	\end{center}
\end{minipage} \\
We denote by $x\i$ (resp. $v\j$) the center of the interval $\prt{x\imh,x\ih}$ (resp. $\prt{v\jmh,v\jh})$. 

Additionally we call the mesh admissible if there exists $\alpha \in (0,1)$ such that
\[ 
	\alpha h \leq \Delta x_i, \Delta v_j \leq h,
\]
for all $(i,j)\in \I\times\Z$. Henceforth, we assume that our mesh admits the existence of such an $\alpha>0$. 

Finally, for some $N_T\in\N$, we set $\Delta t\coloneqq T/N_T$ for the time step and $t^n \coloneqq n\Delta t$, $n=0,\dots,N_T$, to denote the discrete time instances.

\subsection{Discretisation of the data}\label{sec:DefDiscrFunc}
We discretise the initial datum by a piecewise constant function.  We set
\[ 
	f^0_{i,j} \coloneqq \fint_{C_{i,j}} f_0(x,v)\,\dxv, \quad \text{and}\quad g^0_{i,j} \coloneqq \fint_{C_{i,j}} g_0(x,v)\,\dxv, 
\]
for $(i,j) \in \I\times \Z$ as the averaged integral $\fint$ of the initial datum $(f_0,g_0)$ over the cell $C_{ij}$.

To approximate the functions $f$ and $g$ we use piecewise constant functions on each cell  $(t\n,t\np)\times C\ij$, $n=0,\dots,N_T-1$, $(i,j) \in \I\times \Z$. For that purpose, we write these approximations as
\[
f_{i,j}^n \approx \fint_{C_{i,j}}f(t^n, x, v) \dxv, \quad\text{and}\quad g_{i,j}^n \approx \fint_{C_{i,j}}g(t^n, x, v)\dxv.
\]
Besides, we  define the piecewise constant approximations, $\rho_h$ and $\eta_h$, of the macroscopic densities $\rho$ and $\eta$ as 
\[
	\rho_h(t,x) = \rho_i^n, \qquad \text{and}\qquad \eta_h(t,x) = \eta_i^n,
\]
for $(t,x)\in [t^n,t^{n+1})\times (x_{i-1/2},x_{i+1/2})$, with $i\in \I$, and
\begin{align*}
	\rho_i^n \coloneqq \sum_{j\in \Z} \Delta v_j f_{i,j}^n, \quad \text{and} \quad \eta_i^n \coloneqq \sum_{j\in \Z} \Delta v_j g_{i,j}^n.
\end{align*}

However, these sums are over infinitely many entries $j\in\Z$. To implement the scheme, we have to work in a bounded domain. Therefore we need to truncate the velocity domain. Hence, we choose an arbitrary $v_h>0$ sufficiently large, such that $v_h\to \infty$ as $h\to 0$ and restrict the velocity domain to $(-v_h, v_h)$. We introduce the index set $\J \coloneqq \{ j\in\mathbb{Z} \, : \, \abs{v_{j+1/2}}\leq v_h \}$ which consists of all indices $j$ of the interfaces $(v_{j-1/2})_j$ that are inside the truncated velocity domain. Note that the choice of $v_h>0$ is made precise in Remark \ref{rem:ChoiceOfvh}.

Therefore, we define the piecewise constant approximation associated with the iterates obtained from the scheme, $(f_h,g_h)$ on $[0,T)\times [-L,L] \times(-v_h,v_h)$. They are extended by zero to the whole domain $[0,T)\times [-L,L] \times\R$, such that
\[
(f_h, g_h)(t,x,v) \coloneqq 
\begin{dcases} 
	(f_{i,j}^n, g_{i,j}^n), & \text{if $(t,x,v)\in [t^n,t^{n+1})\times C_{i,j}$ and $(i,j)\in \I\times \J,$} \\ 
	(0,0), & \text{else}. 
\end{dcases} 
\]

\subsection{Construction of the method}\label{sec:ConstMethod}
We obtain the finite volume approximation by integrating  system \eqref{eq:main_system} over a test cell, $(t\n,t\np)\times C\ij$ for a fixed $n\in\{0,\dots, N_T-1\}$, $i\in \I$ and $j\in\Z$. A formal computation yields
\begin{align*}
	\left\{
	\begin{array}{rl}
	\ds \fint_{C_{i,j}} \!\!f(t^{n+1}, x,v) - f(t^n, x,v)\,\dxv 
	\!\!\!\!& = \ds -\frac{\prescript{x}\!F_{i+1/2,j}^n-\prescript{x}\!F_{i-1/2,j}^n+\prescript{v}\!F_{i,j+1/2}^n-\prescript{v}\!F_{i,j-1/2}^n}{\abs{C_{i,j}}},\\[1em]
	\ds \fint_{C_{i,j}} \!\!g(t^{n+1}, x,v) - g(t^n, x,v)\,\dxv \!\!\!\!
	& = \ds -\frac{\prescript{x}\!G _{i+1/2,j}^n-\prescript{x}\!G_{i-1/2,j}^n+\prescript{v}\!G _{i,j+1/2}^n-\prescript{v}\!G_{i,j-1/2}^n}{\abs{C_{i,j}}},
	\end{array}
	\right.
\end{align*}
where $\prescript{x}\!F_{i+1/2,j}^n, \, \prescript{v}\!F_{i,j+1/2}^n$ are the fluxes of $f$ on the respective parts of the boundary of the cell $C_{i,j}$ given by
\[
	\begin{dcases}
	\prescript{x}\!F_{i+1/2,j}^n =\int_{t^n}^{t^{n+1}} \int_{v_{j-1/2}}^{v_{j+1/2}} v f(t,x_{i+1/2},v)\,\dx v\,\dx t,  \\
	\prescript{v}\!F_{i,j+1/2}^n =\int_{t^n}^{t^{n+1}} \int_{x_{i-1/2}}^{x_{i+1/2}} -\Upsilon_f(t,x) f(t,x,v_{j+1/2})\,\dx x\,\dx t,
	\end{dcases}
\]
for $(i,j)\in \I\times\Z$.
Similarly, $\prescript{x}\!G_{i+1/2,j}^n, \, \prescript{v}\!G_{i,j+1/2}^n$ are the fluxes of $g$ on the space and velocity boundary of the cell $C_{i,j}$, i.e., 
\[
	\begin{dcases}
	\prescript{x}\!G_{i+1/2,j}^n =\int_{t^n}^{t^{n+1}} \int_{v_{j-1/2}}^{v_{j+1/2}} v g(t,x_{i+1/2},v)\,\dx v\,\dx t, \\
	\prescript{v}\!G_{i,j+1/2}^n =\int_{t^n}^{t^{n+1}} \int_{x_{i-1/2}}^{x_{i+1/2}} -\Upsilon_g(t,x) g(t,x,v_{j+1/2})\,\dx x\,\dx t,
	\end{dcases}
\]
with $(i,j)\in\I\times\Z$.

If we apply the piecewise constant approximation for $f,g,\rho$, and $\eta$ as in Subsection \ref{sec:DefDiscrFunc}, we arrive at the discrete version of \eqref{eq:main_system}:
\begin{subequations}
\label{eq:finite-volume-method}
\begin{equation}
\label{eq:discrete_scheme}
\begin{dcases} 
f_{i,j}^{n+1}=f_{i,j}^n-\frac{1}{\abs{C_{i,j}}} \prt*{ \Fx_{i+1/2,j}^n- \Fx_{i-1/2,j}^n +\Fv _{i,j+1/2}^n -\Fv _{i,j-1/2}^n }, \\
g_{i,j}^{n+1}=g_{i,j}^n-\frac{1}{\abs{C_{i,j}}} \prt*{ \Gx _{i+1/2,j}^n- \Gx_{i-1/2,j}^n +\Gv _{i,j+1/2}^n -\Gv _{i,j-1/2}^n },
\end{dcases}
\end{equation}
for $(i,j)\in\I\times\J$ and $n\in \{0,\dots,N_T\}$. Note that we have replaced the continuous fluxes above by the discrete upwind fluxes $\Fx_{i+1/2,j}^n, \, \Fv_{i,j+1/2}^n, \, \Gx_{i+1/2,j}^n, \, \Gv_{i,j+1/2}^n$, defined as
\begin{equation}
	\label{eq:discr_one}
	\begin{dcases}
	\ds \Fx_{i+1/2,j}^n = \ds \Delta t \Delta v_j \,\prt*{f_{i,j}^n [v_j]^+  - f_{i+1,j}^n [v_j]^-}, \\[0.5em]
	\Fv_{i,j+1/2}^n = \Delta t \Delta x_i \, \prt*{f_{i,j}^n [(\Upsilon_f)_i^n]^- -f_{i,j+1}^n [(\Upsilon_f)_i^n]^+},
	\end{dcases}
\end{equation}
and, similarly,
\begin{equation}
	\label{eq:discr_two}
	\begin{dcases}
	\Gx_{i+1/2,j}^n =\Delta t \Delta v_j \prt*{g_{i,j}^n [v_j]^+  -g_{i+1,j}^n [v_j]^-}, \\[0.5em]
	\Gv_{i,j+1/2}^n = \Delta t \Delta x_i \prt*{g_{i,j}^n [(\Upsilon_g)_i^n]^- - g_{i,j+1}^n[(\Upsilon_g)_i^n]^+},
	\end{dcases}
\end{equation}
for $(i,j)\in \I\times\J$. The terms $(\Upsilon_f)_i^n$ and $(\Upsilon_g)_i^n$ are the approximations of the interaction terms $\Upsilon_f$ and $\Upsilon_g$ at the point $(t\n,x\i)$, and are defined by
\begin{align}
	\left\{
	\begin{array}{rl}
	\ds (\Upsilon_f)_i^n \! \! \! \!&\coloneqq \ds \sum_{k\in\I} \prt*{ \rho^n_k \int_{x_{k-1/2}}^{x_{k+1/2}}K_{11}'(x_i-y)\dx{y} +\eta^n_k \int_{x_{k-1/2}}^{x_{k+1/2}}K_{12}'(x_i-y) \dx{y}},\\[1.9em]
		\ds (\Upsilon_g)_i^n \! \! \! \!&\coloneqq \ds \sum_{k\in\I}\prt*{\eta^n_k  \int_{x_{k-1/2}}^{x_{k+1/2}}K_{22}'(x_i-y)\dx{y} + \rho^n_k \int_{x_{k-1/2}}^{x_{k+1/2}}K_{21}'(x_i-y)\dx{y}}.
	\end{array}
	\right.
\end{align}
\end{subequations}

The scheme is complemented with periodic boundary conditions in space, i.e.,
\begin{subequations}
	\label{eq:BDCond}
	\begin{equation}
		f_{N_x,j}^n=f_{0,j}^n, \quad g_{N_x,j}^n=g_{0,j}^n, 
	\end{equation}
	\begin{equation}
		f_{-1,j}^n=f_{N_x-1,j}^n, \quad g_{-1,j}^n=g_{N_x-1,j}^n,
	\end{equation}
where the values $f_{-1,j}^n$, $g_{-1,j}^n$, $f_{N_x,j}^n$, $g_{N_x,j}^n$ represent an approximation on a ``virtual cell''. In velocity we have no-flux boundaries, i.e.
	\begin{equation}
	\Fv _{i,j+1/2}^n=0=\Gv_{i,j+1/2}^n 
	\end{equation}
\end{subequations}
for all $(i,j)\in \I \times \mathbb{\Z}\setminus \J.$

\subsection{Finite volume scheme}\label{sec:FVM}
Throughout the paper we will use the following two representations of our scheme. First, we consider
\begin{equation*}
\tag{\ref{eq:discrete_scheme}}
\begin{dcases} 
f_{i,j}^{n+1}=f_{i,j}^n-\frac{1}{\abs{C_{i,j}}} \big( \Fx_{i+1/2,j}^n- \Fx_{i-1/2,j}^n +\Fv _{i,j+1/2}^n -\Fv _{i,j-1/2}^n \big), \\
g_{i,j}^{n+1}=g_{i,j}^n-\frac{1}{\abs{C_{i,j}}} \big( \Gx _{i+1/2,j}^n- \Gx_{i-1/2,j}^n +\Gv _{i,j+1/2}^n -\Gv _{i,j-1/2}^n \big),
\end{dcases}
\end{equation*}
for $n=0,\dots,N_T-1$ and $(i,j)\in \I\times\J$, where $\Fx$, $\Fv$, $\Gx$ and $\Gv$ are defined in \eqref{eq:discr_one} and \eqref{eq:discr_two}. Second, we can rewrite the scheme \eqref{eq:finite-volume-method}  and get by a short computation
\begin{align}
\label{eq:discrete_scheme_f_h_long}
\begin{aligned}
p_{i,j}^{n+1} =& \bigg( 1-\Delta t \bigg[ \frac{\abs{v_j}}{\Delta x_i}+\frac{\abs{(\Upsilon_p)_i^n}}{\Delta v_j} \bigg] \bigg) p_{i,j}^n +\Delta t \frac{[v_j]^-}{\Delta x_i} p_{i+1,j}^n + \Delta t \frac{[v_j]^+}{\Delta x_i} p_{i-1,j}^n \\& +\Delta t \frac{[(\Upsilon_p)_i^n]^+}{\Delta v_j} p_{i,j+1}^n + \Delta t \frac{[(\Upsilon_p)_i^n]^-}{\Delta v_j} p_{i,j-1}^n,
\end{aligned}
\end{align}
for $p\in\{f,g\}$ and $n=0,\dots,N_T-1$ and $(i,j)\in \I\times\J$. For both representations we use the boundary conditions \eqref{eq:BDCond}.

\subsection{Statement of the main result}
Before stating the main result,  let us introduce our notion of solutions.
\begin{definition}[Weak formulation] 
	\label{def:weak_sol}
	We call the pair $\prt{f,g}$ a weak solution to system \eqref{eq:main_system} if it satisfies
	\[
	\begin{dcases}
	\int_{Q_T}	f \bigg( \frac{\partial\varphi}{\partial t} + v \frac{\partial \varphi}{\partial x}-\Upsilon_f  \frac{\partial\varphi}{\partial v} \bigg) \dtxv + \int_Q f_0 \prt{x,v} \varphi \prt{0,x,v}\dxv = 0,\\
	\int_{Q_T}  g \bigg( \frac{\partial\varphi}{\partial t} + v \frac{\partial \varphi}{\partial x}-\Upsilon_g \frac{\partial\varphi}{\partial v} \bigg) \dtxv + \int_Q g_0 \prt{x,v} \varphi \prt{0,x,v}\dxv = 0,
	\end{dcases}
	\]
	for every $\varphi \in C_c^\infty([0,T)\times Q)$.
\end{definition}
 The main result of this paper is the following theorem.
\begin{reptheorem}{th:convergence}[Convergence of the scheme]
Let $p_0\in\{f_0,g_0\}$ be non-negative and bounded from above by a function $R$ of the form
\[ 
	R\prt{x,v}\coloneqq \frac{C}{1+\abs{v}\lone+\abs{x}\ltwo}, 
\]
with $\lambda_1 >1$, $\lambda_2\geq 1$ and $\lambda_2\leq \lambda_1$, for some $C>0$, $(x,v)\in Q$.
 Let $K_{ij} \in W^{2,\infty} \prt{\Omega}$, for $i,j \in \{1,2\}.$ Assume that there exists $\xi \in \prt{0,1}$ such that $\Delta t$ satisfies the condition
\begin{equation*}
	\Delta t\leq \frac{(1-\xi)\alpha}{v_h+\C}\, h,
\end{equation*}
for $\C \coloneqq \max \{ \norm{\pii'}_{L^\infty \prt{\Omega}} + \norm{\pij'}_{L^\infty \prt{\Omega}}, \norm{\pjj'}_{L^\infty \prt{\Omega}} + \norm{\pji'}_{L^\infty \prt{\Omega}}  \}$ and $v_h h^{1/2}\to 0$, as $h\to 0$. Denoting by $\prt{f_h, g_h}$ the numerical solution to the scheme \eqref{eq:finite-volume-method},  we have
\[
	f_h\rightharpoonup f, \qquad \text{and} \qquad g_h \rightharpoonup g,
\]
in $L^\infty \prt{Q_T}$ weakly-$\ast$ as $h\to 0,$ where $\prt{f,g}$ is a weak solution to system \eqref{eq:main_system}, in the sense of Definition \ref{def:weak_sol}.
\end{reptheorem}
This theorem we will prove in Section \ref{sec:Convergence}, after we have shown in Section \ref{sec:Properties} that the solution $\prt{f_h, g_h}$ has structure preserving properties.

\section{Properties of the numerical method and a priori estimates}\label{sec:Properties}
This section is dedicated to establishing the positivity and boundedness of the discrete approximation obtained in Subsection \ref{sec:FVM}.
\subsection{Positivity of the solution and CFL condition}
In order to mimic the structure-preserving properties of system \eqref{eq:main_system} at the level of the approximations, a stepsize restriction is required. Indeed, we assume that there exists $\xi \in (0,1)$ such that, for both species, $p\in\{f,g\}$, 
\begin{equation}
	\label{eq:clf}
	\frac{\Delta t}{\Cij} \prt*{\Delta v\j \abs{v\j}+\Delta x\i \abs{\prt{\Upsilon_p}\i\n}} \leq 1-\xi, 
\end{equation}
for all $\prt{i,j}\in \I\times \J$, and all $n\in \N.$

It is absolutely crucial to stress that, albeit apparently dependent on $n$, the stepsize restriction, \eqref{eq:clf}, can be shown to be satisfied uniformly in $n$. Indeed, we shall see in the subsequent proposition that it is independent of $n$ using a short induction argument.

\begin{prop}[Positivity preservation of the scheme]
	\label{prop:positivity}
	Let $K_{ij}\in W^{1,\infty} \prt{\Omega}$, $i,j\in \{1,2\}$, $p\in \{f,g\}$ with a non-negative initial condition $p_0\in \{f_0,g_0\}$ with $\|p_0\|_{L^1\prt{Q}}=1$. Assume that
	there exists $\xi \in \prt{0,1}$ such that the stepsize restriction
		\begin{equation}
		\label{eq:cflinitial}
		\frac{\Delta t}{\Cij} \prt{\Delta v\j \abs{v\j}+\Delta x\i \abs{\prt{\Upsilon_p}\i^0}} \leq 1-\xi, 
		\end{equation}
		is satisfied. Then, the following holds true:
	\begin{enumerate}
		\item $p\ij\n \geq 0$,  for all $(i,j)\in \I\times\J$, and $\norm{p_h(t\n)}_{L^1\prt{Q}}=\norm{p_h(t=0)}_{L^1\prt{Q}}$, for all $n\in\N$.
		\item If $\Delta t$ is chosen such that
		\begin{equation}
		\label{eq:deltat}
		\Delta t\leq \frac{(1-\xi)\alpha}{v_h+\C}\, h,
		\end{equation}
		with $\xi$ as in \eqref{eq:cflinitial} and $\C$ is defined by
		\[
		\C \coloneqq \max_{i\in \{1,2\}}  \sum_{j=1}^2 \norm{K_{ij}'}_{L^\infty \prt{\Omega}} ,
		\]
		then the CFL condition \eqref{eq:clf} is satisfied for the two species uniformly in $n\in\N$.
	\end{enumerate}
\end{prop}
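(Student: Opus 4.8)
The plan is to prove both assertions by one induction on $n$ that simultaneously propagates three properties: $p_{i,j}^n\ge0$ for all $(i,j)$, the $L^1$-identity $\norm{p_h(t^n)}_{L^1(Q)}=\norm{p_h(0)}_{L^1(Q)}$, and the CFL bound \eqref{eq:clf} at level $n$. The three cannot be separated, because \eqref{eq:clf} at level $n$ is what makes the iterate at level $n+1$ non-negative, while the estimate on $(\Upsilon_p)_i^n$ that validates \eqref{eq:clf} at level $n$ relies on positivity and mass conservation \emph{at} level $n$; this is exactly the interdependence flagged before the statement, and the role of the sharper restriction \eqref{eq:deltat} is to turn the CFL step into an $n$-independent estimate. (In particular \eqref{eq:deltat} already forces the base case \eqref{eq:cflinitial}, since $\abs{(\Upsilon_p)_i^0}\le\C$ by the same bound derived below.)

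\emph{The convex-combination step.} I would use the form \eqref{eq:discrete_scheme_f_h_long}, which writes $p_{i,j}^{n+1}$ as an affine combination of $p_{i,j}^n$ and its four neighbours $p_{i\pm1,j}^n$, $p_{i,j\pm1}^n$. Since $[a]^++[a]^-=\abs{a}$, the five coefficients sum to $1$; the four off-diagonal ones are manifestly non-negative; and the diagonal coefficient $1-\Delta t\prt*{\abs{v_j}/\Delta x_i+\abs{(\Upsilon_p)_i^n}/\Delta v_j}$ is non-negative precisely when \eqref{eq:clf} holds at level $n$ (divide \eqref{eq:clf} by $\abs{C_{i,j}}=\Delta x_i\Delta v_j$). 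Hence, if $p_{i,j}^n\ge0$ and \eqref{eq:clf} holds at level $n$, then $p_{i,j}^{n+1}\ge0$ as a convex combination of non-negative numbers.

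\emph{Mass conservation and the bound on the interaction term.} Multiplying \eqref{eq:discrete_scheme} by $\abs{C_{i,j}}$ and summing over $(i,j)\in\I\times\J$, the spatial fluxes telescope and cancel by the periodicity in \eqref{eq:BDCond}, and the velocity fluxes telescope and vanish by the no-flux condition in \eqref{eq:BDCond}; this gives $\sum_{i,j}\abs{C_{i,j}}p_{i,j}^{n+1}=\sum_{i,j}\abs{C_{i,j}}p_{i,j}^n$, which under non-negativity is the claimed $L^1$-identity, so iterating yields $\norm{p_h(t^n)}_{L^1(Q)}=\norm{p_h(0)}_{L^1(Q)}\le\norm{p_0}_{L^1(Q)}=1$. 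From $\int_{x_{k-1/2}}^{x_{k+1/2}}\abs{K_{1m}'(x_i-y)}\dx y\le\norm{K_{1m}'}_{L^\infty(\Omega)}\Delta x_k$ and the triangle inequality in the definition of $(\Upsilon_f)_i^n$, one then gets $\abs{(\Upsilon_f)_i^n}\le\norm{K_{11}'}_{L^\infty(\Omega)}\norm{\rho_h^n}_{L^1(\Omega)}+\norm{K_{12}'}_{L^\infty(\Omega)}\norm{\eta_h^n}_{L^1(\Omega)}$, and since positivity makes $\norm{\rho_h^n}_{L^1(\Omega)}=\norm{f_h^n}_{L^1(Q)}\le1$ (and likewise for $\eta$), we obtain $\abs{(\Upsilon_f)_i^n}\le\norm{K_{11}'}_{L^\infty(\Omega)}+\norm{K_{12}'}_{L^\infty(\Omega)}\le\C$, and symmetrically $\abs{(\Upsilon_g)_i^n}\le\C$. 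Feeding this into \eqref{eq:clf} and using admissibility $\Delta x_i,\Delta v_j\ge\alpha h$ together with $\abs{v_j}\le v_h$ for $j\in\J$,
\[
	\frac{\Delta t}{\abs{C_{i,j}}}\prt*{\Delta v_j\abs{v_j}+\Delta x_i\abs{(\Upsilon_p)_i^n}}=\Delta t\prt*{\frac{\abs{v_j}}{\Delta x_i}+\frac{\abs{(\Upsilon_p)_i^n}}{\Delta v_j}}\le\frac{\Delta t}{\alpha h}(v_h+\C)\le1-\xi
\]
by \eqref{eq:deltat}; this is exactly \eqref{eq:clf} at level $n$, with the same $\xi$ as in \eqref{eq:cflinitial}.

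With these two steps the induction closes: at $n=0$ we have $p_{i,j}^0\ge0$ because $f_0,g_0\ge0$, $\norm{p_h(0)}_{L^1(Q)}\le1$, and hence \eqref{eq:clf} holds at level $0$; and if $p_{i,j}^m\ge0$, the $L^1$-identity, and \eqref{eq:clf} hold for all $m\le n$, then the convex-combination step gives $p_{i,j}^{n+1}\ge0$, telescoping gives the $L^1$-identity at $n+1$, and the displayed estimate gives \eqref{eq:clf} at level $n+1$. This proves (1), and simultaneously the uniform-in-$n$ validity of \eqref{eq:clf} asserted in (2). The main obstacle is exactly the circular-looking dependence described at the outset — one cannot fix the CFL constant independently of the solution at the same time level — and it is resolved only by advancing positivity, mass conservation and the CFL bound together in a single induction; the remaining ingredients ($[a]^++[a]^-=\abs{a}$, the telescoping of fluxes under \eqref{eq:BDCond}, and the elementary estimate on $\int\abs{K_{ij}'}$) are routine.
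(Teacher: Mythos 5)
Your proposal is correct and follows essentially the same route as the paper: an induction that advances positivity, the telescoping mass identity, and the CFL bound together, with non-negativity obtained from the convex-combination form \eqref{eq:discrete_scheme_f_h_long}, mass conservation from the periodic/no-flux boundary conditions, and the uniform bound $\abs{\prt{\Upsilon_p}\i\n}\leq\C$ (via the $L^1$-control of the macroscopic densities) closing the induction under \eqref{eq:deltat}. The only cosmetic difference is that you make the interdependence of the three properties explicit as a single simultaneous induction and note that \eqref{eq:deltat} already implies the base-case restriction \eqref{eq:cflinitial}, which the paper instead takes as a separate hypothesis.
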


\begin{rmk}
Note that, by the Proposition \ref{prop:positivity}, the positivity of $f_h$ and $g_h$ is guaranteed, and the scheme conserves the mass.
\end{rmk}

\begin{proof}
We proceed by induction. First, let us consider $n=0$. Since $p_0$ is non-negative we know that $p\ij^0\geq 0$, which implies \textit{(1)}. On the other hand, for $n=0$, the CFL condition \eqref{eq:clf} is satisfied by assumption. Next, let us assume for $n$ fixed that the statement \textit{(1)} and condition \eqref{eq:clf} hold true. Let us prove \textit{(1)} for $n+1$. We consider the representation \eqref{eq:discrete_scheme_f_h_long} of our scheme. Since, by induction assumption, $p\ij\n\geq 0$ for all $i\in \I$ and $j\in \J$, and  condition \eqref{eq:clf} is met for $n$, we derive from the representation \eqref{eq:discrete_scheme_f_h_long} that
\[ 
	p\ij\np\geq 0.
\]

Next, we prove the conservation of mass. Using the non-negativity in conjunction with the scheme, system \eqref{eq:discrete_scheme}, we compute
\begin{align*}
	\norm{p_h\prt{t\np}}_{L^1\prt{Q}}= & \sum_{i\in \I, j\in \J} \Cij\,  p\ij\np \\
 	=& \sum_{i\in \I, j\in \J} \Cij\,  p\ij\n - \sum_{i\in \I, j\in \J} \big( \Px_{i+1/2,j}^n- \Px_{i-1/2,j}^n\big)  \\
	&\qquad -\sum_{i\in \I, j\in \J} \big(\Pv _{i,j+1/2}^n -\Pv _{i,j-1/2}^n \big) \\
	=& \sum_{i\in \I, j\in \J} \Cij\,  p\ij\n,
\end{align*}
since both sums over the fluxes are telescopic sums and having exploited the periodic and no-flux boundary conditions. Therefore, we obtain
\[
	\norm{p_h\prt{t\np}}_{L^1\prt{Q}}= \sum_{i\in \I, j\in \J} \Cij\, p\ij\n =\norm{p_h\prt{t\n}}_{L^1\prt{Q}}=\norm{p_h\prt{0}}_{L^1\prt{Q}},
\]
where the last equality holds by assumption. Thus, the conservation of mass, \textit{(1)}, is guaranteed at the numerical level.
 	
Next, we prove statement \textit{(2)}. Let $\zeta_h\in\{\rho_h,\eta_h\}$ be the respective macroscopic density of  $p_h\in \{f_h,g_h\}$. We know that
\[
	\into \zeta_h \prt{t\np,x}\dx x=\sum_{i\in \I, j\in \J} \Cij\,  p\ij\np=\sum_{i\in \I, j\in \J} \Cij\, p\ij^0\leq\sum_{i\in \I, j\in \Z} \Cij p\ij^0 =1.
\]
Then we compute for $p_h =f_h$
\begin{align}
\label{eq:upsbound}
\begin{aligned}
	\abs{\prt{\Upsilon_f}_i^{n+1}} 
	&=\abss{\into \pii' \prt{x\i-y} \rho_h \prt{t\np,y}\,\dx{y} + \pij'\prt{x\i-y} \eta_h\prt{t\np,y}\dx{y} } \\
	&\leq \norm{\pii'}_{L^\infty\prt{\Omega}} \into \rho_h \prt{t\np,y}\dx{y}+\norm{\pij'}_{L^\infty\prt{\Omega}} \into \eta_h \prt{t\np,y}\dx{y}\\
	&\leq \C.
\end{aligned}
\end{align}
The same estimate can be established for the other species, $p_h=g_h$. Overall, this shows that
\[
	\Delta t \bigg( \frac{\abs{v\j}}{\Delta x\i} + \frac{\abs{\prt{\Upsilon_p}_i^{n+1}}}{\Delta v\j} \bigg) \leq\Delta t \bigg( \frac{v_h}{\alpha h}+\frac{\C}{\alpha h} \bigg), 
\]
for $p\in\{f,g\}.$ So, if we choose $\Delta t$ such that 
\[ \frac{\prt{1-\xi}\alpha}{v_h+\C} \,h >\Delta t, \]
we  can guarantee
\[
\Delta t \bigg( \frac{\abs{v\j}}{\Delta x\i} + \frac{\abs{\prt{\Upsilon_p}_i^{n+1}}}{\Delta v\j}  \bigg) \leq 1-\xi.
\]
Thus, the stepsize condition \eqref{eq:clf} is satisfied for both species and all $n\in \N$.
\end{proof}

\subsection{Boundedness of the solution and an a priori estimate}
We will begin by proving that the solutions of the scheme described in Subsection \ref{sec:FVM} are bounded in $L^p(Q)$ for each time $t\in(0,T)$. This we will prove using the next proposition.
\begin{prop} 
\label{prop:convex}
Consider a non-negative, convex function $\phi:\R\to\R$ such that
\[
\into \int_\R \phi (p_0(x,v)) \,\dxv < +\infty,
\] 
for $p_0 \in \{f_0, g_0\}$. Let the assumptions of Proposition~\ref{prop:positivity} hold true. Then, under the CFL condition \eqref{eq:clf}, the numerical solution satisfies
\begin{gather*}
\into \int_\R \phi \prt{p_h(t+\tau,x,v)}\,\dxv \leq \into \int_\R \phi (p_h(t,x,v))\,\dxv,
\end{gather*}
for $p\in \{f,g\}$ and every $t,\tau \geq 0.$ 
\end{prop}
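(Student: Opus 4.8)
The plan is to show that the discrete convex-functional estimate follows from a cell-by-cell convexity argument applied to the representation \eqref{eq:discrete_scheme_f_h_long}, together with the CFL condition \eqref{eq:clf}, and then to iterate over time steps. The key observation is that, under \eqref{eq:clf}, the update formula \eqref{eq:discrete_scheme_f_h_long} writes $p_{i,j}^{n+1}$ as a \emph{convex combination} of the five neighbouring values $p_{i,j}^n$, $p_{i\pm1,j}^n$, $p_{i,j\pm1}^n$: the coefficient of $p_{i,j}^n$ is $1-\Delta t\big[\abs{v_j}/\Delta x_i+\abs{(\Upsilon_p)_i^n}/\Delta v_j\big]$, which is nonnegative precisely by \eqref{eq:clf} (recall $\Cij=\Delta x_i\Delta v_j$), the other four coefficients are manifestly nonnegative, and the five coefficients sum to $1$ since $[v_j]^++[v_j]^-=\abs{v_j}$ and $[(\Upsilon_p)_i^n]^++[(\Upsilon_p)_i^n]^-=\abs{(\Upsilon_p)_i^n}$.

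First I would fix $n$, and for each $(i,j)\in\I\times\J$ apply Jensen's inequality to the convex function $\phi$ across this convex combination, obtaining
\[
\phi\big(p_{i,j}^{n+1}\big)\le \Big(1-\Delta t\big[\tfrac{\abs{v_j}}{\Delta x_i}+\tfrac{\abs{(\Upsilon_p)_i^n}}{\Delta v_j}\big]\Big)\phi(p_{i,j}^n)
+\Delta t\tfrac{[v_j]^-}{\Delta x_i}\phi(p_{i+1,j}^n)+\Delta t\tfrac{[v_j]^+}{\Delta x_i}\phi(p_{i-1,j}^n)
+\Delta t\tfrac{[(\Upsilon_p)_i^n]^+}{\Delta v_j}\phi(p_{i,j+1}^n)+\Delta t\tfrac{[(\Upsilon_p)_i^n]^-}{\Delta v_j}\phi(p_{i,j-1}^n).
\]
Then I would multiply by $\Cij=\Delta x_i\Delta v_j$ and sum over $(i,j)\in\I\times\J$. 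On the right-hand side, the spatial sums telescope using the periodic boundary conditions \eqref{eq:BDCond}, and the velocity sums telescope using the no-flux condition at $j$-interfaces outside $\J$; after reindexing, the coefficient of each $\phi(p_{i,j}^n)$ recombines exactly to $\Cij$, so that $\sum_{i,j}\Cij\,\phi(p_{i,j}^{n+1})\le\sum_{i,j}\Cij\,\phi(p_{i,j}^n)$. Since $\phi$ is nonnegative and $(f_h,g_h)$ vanishes outside $\I\times\J$, and since $\phi(0)\le \phi(p_0)$-type issues only help (or one simply notes $\phi$ nonnegative), the left side equals $\into\int_\R\phi(p_h(t^{n+1},\cdot))\,\dxv$; this establishes the claim for $t=t^n$, $\tau=\Delta t$. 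Iterating gives it for all pairs of grid times, and because $p_h$ is piecewise constant in $t$ on intervals $[t^n,t^{n+1})$, the inequality for arbitrary $t,\tau\ge0$ reduces to the grid-time case by monotonicity in the number of elapsed steps (with equality on intervals where no grid time is crossed). The case $n=0$ requires $\into\int_\R\phi(p_0)\,\dxv<\infty$, which is the hypothesis, to ensure everything is finite; one should also check the $\Z\setminus\J$ contributions vanish, which they do since $p_h\equiv0$ there and $\phi(0)\ge0$ contributes only harmlessly on the right (indeed one may simply add back those nonnegative terms).

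The main obstacle I expect is purely bookkeeping at the boundary of the truncated velocity domain: when summing the telescoped velocity fluxes, the terms involving $p_{i,j\pm1}^n$ for $j$ at the edge of $\J$ reference cells outside $\J$ where $p_h=0$, and one must verify via the no-flux condition $\Fv_{i,j+1/2}^n=0=\Gv_{i,j+1/2}^n$ for $(i,j)\in\I\times(\Z\setminus\J)$ that no spurious $\phi$-mass is created or lost there — concretely, that the "missing" neighbour contributions correspond to fluxes that have been set to zero, so the recombination of coefficients into $\Cij$ still goes through. A secondary, lesser point is making the convex-combination structure rigorous when some $p_{i,j}^n$ at the $j$-edge is zero: this is automatic since all coefficients are nonnegative and sum to one regardless. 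Beyond that, the argument is a standard monotone-scheme / entropy-dissipation computation.
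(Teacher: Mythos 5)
Your proposal is correct and follows essentially the same route as the paper: rewrite the update \eqref{eq:discrete_scheme_f_h_long} as a convex combination whose coefficient positivity is guaranteed by the CFL condition \eqref{eq:clf}, apply Jensen's inequality cell by cell, multiply by $\Cij$, sum and reindex using the periodic and no-flux boundary conditions, and iterate over time steps, reducing arbitrary $t,\tau$ to grid times via the piecewise-constant-in-time structure. Your extra care about the velocity-truncation bookkeeping is a point the paper passes over silently, but it does not change the argument.
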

\begin{proof}
Consider the representation \eqref{eq:discrete_scheme_f_h_long} of the discrete scheme. Under the CFL condition \eqref{eq:clf}, we observe that  $p\ij\np$ is a convex combination of $p\ij\n$, $p\ipj\n$, $p\imj\n$, $p\ijp\n$, $p\ijm\n$. By convexity of $\phi$, we obtain
\begin{align*}
\phi (p_{i,j}^{n+1} ) \leq & \bigg( 1-\Delta t \bigg[ \frac{\abs{v_j}}{\Delta x_i}+\frac{\abs{(\Upsilon_p)_i^n}}{\Delta v_j} \bigg] \bigg) \phi ( p_{i,j}^n ) +\Delta t \frac{[v_j]^-}{\Delta x_i} \phi (p_{i+1,j}^n) + \Delta t \frac{[v_j]^+}{\Delta x_i} \phi ( p_{i-1,j}^n ) \\
& +\Delta t \frac{[(\Upsilon_p)_i^n]^+}{\Delta v_j} \phi ( p_{i,j+1}^n ) + \Delta t \frac{[(\Upsilon_p)_i^n]^-}{\Delta v_j} \phi( p_{i,j-1}^n).
\end{align*}
Integrating in space and velocity, we have
\begin{align*}
	\into \int_\R \phi (p_h (t\np,x,v))\dxv =&\sum_{i\in \I, j\in \J} \Cij\phi (p\ij\np) \\
	\leq & \sum_{i\in \I} \sum_{j\in\J} \bigg[ \big( \Delta x\i \Delta v\j-\Delta t \prt{ \abs{v\j}\Delta v\j + \abs{\prt{\Upsilon_p}\i\n} \Delta x\i} \big) \phi \prt{p\ij\n} \\
	& \quad+ \Delta t \Delta v\j [v\j]^- \phi \prt{p\ipj\n}+\Delta t \Delta v\j [v\j]^+ \phi \prt{p\imj\n} \\
	& \quad+ \Delta t \Delta x\i \upsp^+ \phi \prt{p\ijp\n} + \Delta t \Delta x\i \upsp^- \phi \prt{p\ijm\n} \bigg].
\end{align*}
By shifting the indices and applying the boundary conditions \eqref{eq:BDCond} we get
\begin{equation}
\begin{aligned}
	\label{eq:phi-bd-proof}
	\into \int_\R \phi (p_h (t\np,x,v))\dxv \leq & \sum_{i\in \I, j\in \J} \Delta x\i\Delta v\j \phi (p\ij\n) \\ 
	= & \into \int_\R \phi (p_h(t\n,x,v))\dxv.
\end{aligned}
\end{equation}
Finally, let $t, \tau \geq 0$ be given. The statement follows from fixing integers, $n_0,n_1\in\N$ such that $t\in[t^{n_0}, t^{n_0+1})$ and $t+\tau \in [t^{n_1}, t^{n_1+1})$ and applying  estimate \eqref{eq:phi-bd-proof} iteratively. 
\end{proof}

\begin{rmk} Consequently, if the initial data satisfies $\|p_h(t=0)\|_{\infty}\leq  C $, for some constant $C>0$, we may consider  $\phi \prt{r}=\brk{r-C}^+$ to show that the numerical approximation, $p_h$, stays essentially bounded. Analogously, if $p_h(t=0)\in L^q(Q)$, using $\phi(r) = \abs{r}^q$, implies the uniform boundedness of $p_h(t)$ in $L^q(Q)$, for $p_h\in\{f_h,g_h\}$, a strategy similar to \cite{filbet, eymard2000finite}.
\end{rmk}

In the subsequent analysis, more refined bounds are required. To this end, we estimate the tails of $(f_h,g_h)$.
\begin{prop}\label{prop:Boundedness_TimeSpaceVelocity}
	Let the initial datum be non-negative and bounded from above by a function $R$ of the form
	\[ 
		R(x,v)= \frac{C}{1+\abs{v}\lone+\abs{x}\ltwo},
	\]
	for some $C>0$,  $\lambda_1 >1$, $\lambda_2 \geq 1,$ with $\lambda_2\leq \lambda_1,$ i.e., $0 \leq p_0(x,v)\leq R(x,v)$ with $p\in \{f,g\}$. 
	Then, there exists a constant $C_T >0$ depending on $\alpha$, $\lambda_1$, $\lambda_2$, $\C$ and the final time $T>0$ such that
\[
	0 \leq p_h(t,x,v)\leq C_T R_h(x,v), 
\]
for $(t,x,v)\in Q_T$, $p_h\in\{f_h,g_h\},$ where
\[ 
	R_h(x,v)\coloneqq \frac{C}{1+\abs{v\j}\lone+\abs{x_i}\ltwo}, 
\]
for $(x,v)\in C_{ij}.$ As a consequence, for $h$ small enough
\[ 
	0\leq \zeta_h (t,x)\leq C_T, 
\]
for $(t,x)\in \Omega _T$, where $\zeta_h\in\{ \rho_h, \eta_h\}$ are the respective macroscopic density of $p_h\in\{f_h,g_h\}.$
\end{prop}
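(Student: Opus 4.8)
The plan is to compare $p_h$ against a carefully chosen discrete supersolution built from the profile $R_h$ times a time-dependent amplification factor. More precisely, I would seek a sequence $(M^n)_{n}$ with $M^0$ close to $1$ such that $w_{i,j}^n := M^n R_h(x_i,v_j)$ is a \emph{supersolution} of the scheme \eqref{eq:discrete_scheme_f_h_long}, meaning it satisfies the same recursion with ``$=$'' replaced by ``$\geq$''. Since \eqref{eq:discrete_scheme_f_h_long} expresses $p_{i,j}^{n+1}$ as a convex combination (by the CFL condition \eqref{eq:clf}, guaranteed by Proposition \ref{prop:positivity}) of the five neighbouring values with \emph{nonnegative} coefficients, the scheme is monotone; hence if $p_{i,j}^0 \leq w_{i,j}^0$ and $w$ is a supersolution, then $p_{i,j}^n \leq w_{i,j}^n$ for all $n$ by a straightforward induction. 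Choosing $M^n$ so that $M^n \leq C_T$ for $n\Delta t \leq T$ then yields the claim, and the bound on $\zeta_h = \sum_j \Delta v_j p_{i,j}^n$ follows by summing $C_T R_h(x_i,v_j)\Delta v_j$ against $j$, which converges (for $h$ small, using $v_h\to\infty$) to a finite multiple of $\int_{\R}(1+|v|^{\lambda_1})^{-1}\dx v$ uniformly in $i$, because $\lambda_1>1$.

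The heart of the matter is the supersolution inequality. Plugging $w_{i,j}^n = M^n R_h(x_i,v_j)$ into the right-hand side of \eqref{eq:discrete_scheme_f_h_long}, one must show
\[
	R_h(x_i,v_j)\Big(1 - \Delta t\big[\tfrac{|v_j|}{\Delta x_i}+\tfrac{|(\Upsilon_p)_i^n|}{\Delta v_j}\big]\Big)
	+\Delta t\tfrac{[v_j]^-}{\Delta x_i}R_h(x_{i+1},v_j)+\Delta t\tfrac{[v_j]^+}{\Delta x_i}R_h(x_{i-1},v_j)
	+\Delta t\tfrac{[(\Upsilon_p)_i^n]^+}{\Delta v_j}R_h(x_i,v_{j+1})+\Delta t\tfrac{[(\Upsilon_p)_i^n]^-}{\Delta v_j}R_h(x_i,v_{j-1})
	\leq (1+\Lambda\Delta t)\,R_h(x_i,v_j),
\]
for some constant $\Lambda$ depending only on $\alpha,\lambda_1,\lambda_2,\C$; then $M^n := e^{\Lambda t^n}$ (or the discrete analogue $M^n=(1+\Lambda\Delta t)^n$) works, giving $C_T = e^{\Lambda T}$. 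To verify this, I would divide through by $R_h(x_i,v_j)$ and estimate the ratios $R_h(x_{i\pm1},v_j)/R_h(x_i,v_j)$ and $R_h(x_i,v_{j\pm1})/R_h(x_i,v_j)$. Writing $D_{i,j} := 1+|v_j|^{\lambda_1}+|x_i|^{\lambda_2}$, one has $R_h$-ratios equal to $D_{i,j}/D_{i\pm1,j}$ etc., and the key elementary inequalities are of the form $\big||x_{i\pm1}|^{\lambda_2}-|x_i|^{\lambda_2}\big| \leq C(\lambda_2)\,h\,(1+|x_i|^{\lambda_2-1}) \leq C(\lambda_2)\,h\,D_{i,j}$ — here the hypothesis $\lambda_2 \leq \lambda_1$ is what lets the $|x|^{\lambda_2-1}$ growth be absorbed into $D_{i,j}$ after the velocity factor is accounted for — and similarly $\big||v_{j\pm1}|^{\lambda_1}-|v_j|^{\lambda_1}\big|\leq C(\lambda_1)\,h\,(1+|v_j|^{\lambda_1-1})$. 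Because each difference quotient $\Delta t\,|v_j|/\Delta x_i$ or $\Delta t\,|(\Upsilon_p)_i^n|/\Delta v_j$ is multiplied by such a ratio that differs from $1$ by $O(h\,D_{i,j}/D_{i,j})=O(h)$ \emph{relative to} the coefficient, and since the coefficients themselves sum to $1$, the net excess over $R_h(x_i,v_j)$ is controlled by $\Delta t$ times $\big(|v_j|/\Delta x_i\big)\cdot O(h) + \big(\C/\Delta v_j\big)\cdot O(h)$; using $\Delta x_i,\Delta v_j \geq \alpha h$ this is $O(\Delta t)$ with a constant depending only on $\alpha,\lambda_1,\lambda_2,\C$ and on $\|(\Upsilon_p)_i^n\|_\infty\leq\C$ (Proposition \ref{prop:positivity}).

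The main obstacle I anticipate is bookkeeping the algebra cleanly: the velocity-derivative term carries a factor $|v_j|$ in the \emph{transport} direction while $R_h$ decays like $|v_j|^{-\lambda_1}$, so the product $\tfrac{|v_j|}{\Delta x_i}\cdot\tfrac{R_h(x_{i\pm1},v_j)-R_h(x_i,v_j)}{R_h(x_i,v_j)}$ needs the $x$-increment estimate to lose a full power $|x_i|^{\lambda_2-1}$, and one must check that the leftover $|v_j|\cdot|x_i|^{\lambda_2-1}/D_{i,j}$ stays bounded — this is precisely where $\lambda_2\leq\lambda_1$ and $\lambda_2\geq 1$ enter, since $|v_j|\,|x_i|^{\lambda_2-1}\leq \tfrac12(|v_j|^{\lambda_1'}+|x_i|^{(\lambda_2-1)\lambda_1''})$ for suitable conjugate exponents, and one wants both exponents $\leq$ the corresponding powers in $D_{i,j}$. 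A secondary technical point is handling the velocity-boundary indices $j$ near $\pm v_h$, where the no-flux condition \eqref{eq:BDCond} kills the outgoing flux: there the recursion has \emph{fewer} positive terms, so the convex-combination/supersolution inequality only becomes easier, and no separate argument is needed. Finally, for the macroscopic bound one notes $R_h(x_i,v_j)\leq C(1+|v_j|^{\lambda_1})^{-1}$, so $\sum_{j\in\J}\Delta v_j R_h(x_i,v_j)\leq C\sum_{j\in\Z}\Delta v_j(1+|v_j|^{\lambda_1})^{-1}$, a Riemann sum that is bounded uniformly in the mesh by a constant depending on $\alpha$ and $\lambda_1$ (using $\lambda_1>1$), which absorbs into $C_T$.
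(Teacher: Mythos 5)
Your proposal matches the paper's proof in all essentials: the paper also proceeds by induction on the bound $p_h(t^n,\cdot)\leq A^n R_h$ with $A=1+\Delta t\, c_0(1+\C)$, divides the scheme \eqref{eq:discrete_scheme_f_h_long} by $R_h(x_i,v_j)$, estimates the neighbour ratios $R_h(x_{i\pm1},v_j)/R_h(x_i,v_j)$ and $R_h(x_i,v_{j\pm1})/R_h(x_i,v_j)$ by Taylor expansion, and uses exactly the inequality $\abs{x_i}^{\lambda_2-1}\abs{v_j}\leq 1+\abs{v_j}^{\lambda_1}+\abs{x_i}^{\lambda_2}$ (where $\lambda_2\leq\lambda_1$ enters, proved there by a case distinction rather than Young's inequality) before concluding with $A^n\leq e^{c_0(1+\C)T}$ and the same Riemann-sum argument for $\zeta_h$. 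The approach and the key technical points you identify are the same as the paper's.
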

\begin{proof}
Let $p_h\in \{f_h, g_h\}.$ By Proposition \ref{prop:positivity}, we know that $p_h$ is non-negative. Next, since $x\i=x\ip-\frac{1}{2}\prt{\Delta x\i +\Delta x\ip}$, setting $\Delta x\ih=\frac{1}{2}\prt{\Delta x\i+\Delta x\ip}$, by the definition of $R_h$ we have
\begin{align*}
\frac{R_h \prt{x\ip,v\j}}{R_h\prt{x\i,v_j}} & \leq \frac{1+\abs{v\j}\lone +\prt{\abs{x\ip}+\Delta x\ih}\ltwo}{1+\abs{v\j}\lone+\abs{x\ip}\ltwo} \\ 
& \leq \frac{1+\abs{v\j}\lone+\abs{x\ip}\ltwo + C\abs{x\ip}^{\lambda_2-1}\Delta x\ih+\mathcal{O}\prt{\prt{\Delta x\ih}^2}}{1+\abs{v\j}\lone+\abs{x\ip}\ltwo} \\
 & \leq 1 + C\frac{\abs{x\ip}^{\lambda_2-1}}{1+\abs{v\j}\lone+\abs{x\ip}\ltwo}\Delta x\ih +\mathcal{O}\prt{\prt{\Delta x\ih}^2}.
\end{align*}
In the same way, we obtain
\[ \frac{R_h \prt{x\im,v\j}}{R_h\prt{x\i,v\j}}\leq 1 + C\frac{\abs{x\i}^{\lambda_2-1}}{1+\abs{v\j}\lone+\abs{x\i}\ltwo}\Delta x\imh +\mathcal{O}\prt{\prt{\Delta x\imh}^2}.\]
Since, by assumption, $\lambda_2 \leq \lambda_1,$ we derive for $i\in \I$
\[ \frac{\abs{x\i}^{\lambda_2-1} \abs{v\j}}{1+\abs{v\j}\lone+\abs{x\i}\ltwo} \leq 1. \]
Indeed, if $\abs{v\j}\leq \abs{x\i}$, we get
\[ \abs{x\i}^{\lambda_2-1}\abs{v\j} \leq \abs{x\i}\ltwo \leq 1+\abs{v\j}\lone +\abs{x\i}\ltwo. \]
If, instead, $\abs{x\i} < \abs{v\j}$ and $\abs{v\j} \geq 1$, we obtain
\[ \abs{x\i}^{\lambda_2-1} \abs{v\j} \leq \abs{v\j}\ltwo \leq \abs{v\j}\lone \leq 1+\abs{v\j}\lone +\abs{x\i}\ltwo. \]
If, finally, $\abs{x\i} < \abs{v\j}$ and $\abs{v\j} < 1$, then
\[ \abs{x\i}^{\lambda_2-1}\abs{v\j} \leq \abs{x\i}^{\lambda_2-1} \leq 1 \leq 1+\abs{x\i}\ltwo+\abs{v\j}\lone. \]
Therefore, we derive that
\begin{align*}
[v_j]^-\frac{R_h \prt{ x\ip, v\j}}{R_h\prt{x\i,v_j}}+ [v_j]^+ \frac{R_h \prt{x\im,v\j}}{R_h\prt{x\i,v\j}}& \leq \abs{v\j}+ c_1\prt{\lambda_2}\Delta x\i \left( 2+\frac{\Delta x\ip+\Delta x\im}{\Delta x\i} \right) \\ 
& \leq \abs{v\j}+ c_1 \prt{\alpha,\lambda_2} \Delta x\i.
\end{align*}
Now, setting $\Delta v\jh=\frac{1}{2}\prt{\Delta v\j+\Delta v\jp}$, we have
\begin{align*}
\frac{R_h \prt{x\i,v\jp}}{R_h(x\i,v\j)} &= \frac{1+\abs{v\j}\lone+\abs{x\i}\ltwo}{1+\abs{v\jp}\lone+\abs{x\i}\ltwo} \leq \frac{1+\prt{\abs{v\jp}+ \Delta v\jh}\lone+\abs{x\i}\ltwo}{1+\abs{v\jp}\lone+\abs{x\i}\ltwo} \\
 &\leq 1+ C\frac{\abs{v\jp}^{\lambda_1-1}\Delta v\jh+\mathcal{O}\prt{\prt{\Delta v\jh}^2}}{1+\abs{v\jp}\lone+\abs{x\i}\ltwo} \\
 & \leq 1+C \frac{\abs{v\jp}^{\lambda_1-1}}{1+\abs{v\jp}\lone} \Delta v\jh +\mathcal{O}\prt{\prt{\Delta v\jh}^2} \\ 
 & \leq 1+c_2 \prt{\alpha,\lambda_1} \Delta v\j.
\end{align*}
In the same way, we obtain
\[ \frac{R_h \prt{x\i,v\jm}}{R_h\prt{x\i,v\j}}\leq 1+c_4\prt{\alpha, \lambda_1}\Delta v\j. \]
Set $c_0\prt{\alpha,\lambda_1, \lambda_2}=\max\{ c_1,c_2,c_3,c_4 \}.$
Set $A \coloneqq \prt{1+\Delta t c_0 \prt{1+\C}}$. We know that $p_0\prt{x,v}\leq A^0 R_h\prt{x,v}$. 
Let us proceed by induction. Assume $p_h\prt{t\n,x,v}\leq A^n R_h\prt{x,v}$. Using the numerical scheme \eqref{eq:discrete_scheme_f_h_long}, we have
\begin{align*}
\frac{p\ij\np}{R_h\prt{x\i,v\j}}  =& \bigg( 1-\Delta t \bigg[ \frac{\abs{v\j}}{\Delta x\i}+\frac{\abs{\prt{\Upsilon_p}\i\n}}{\Delta v\j}\bigg]\bigg)\frac{p\ij\n}{R_h\prt{x\i,v\j}} +\Delta t\frac{[v\j]^-}{\Delta x\i}\frac{p\ipj\n}{R_h\prt{x\ip,v\j}} \frac{R_h \prt{x\ip,v\j}}{R_h\prt{x\i,v\j}} \\ & +\Delta t \frac{[v\j]^+}{\Delta x\i} \frac{p\imj\n}{R_h \prt{x\im,v\j}} \frac{R_h \prt{x\im,v\j}}{R_h \prt{x\i,v\j}} +\Delta t \frac{\brk{\prt{\Upsilon_p}\i\n}^+}{\Delta v\j} \frac{p\ijp\n}{R_h \prt{x\i,v\jp}} \frac{R_h\prt{x\i,v\jp}}{R_h\prt{x\i,v\j}} \\ & + \Delta t \frac{\brk{\prt{\Upsilon_p}\i\n}^-}{\Delta v\j} \frac{p\ijm\n}{R_h \prt{x\i,v\jm}} \frac{R_h \prt{x\i,v\jm}}{R_h \prt{x\i,v\j}}.
\end{align*}
Hence, using the estimates above and \eqref{eq:upsbound}, we arrive at
\begin{align*}
\frac{p\ij\np}{R_h \prt{x\i,v\j}}  \leq & \bigg( 1- \Delta t \bigg[ \frac{\abs{v\j}}{\Delta x_i} +\frac{\abs{\prt{\Upsilon_p}\i\n}}{\Delta v\j} \bigg]\bigg) A\n + \Delta t \frac{\abs{v\j}}{\Delta x\i} A\n \bigg(1+c_0 \frac{\Delta x\i}{\abs{v\j}}\bigg) \\
& +\Delta t \frac{\abs{\prt{\Upsilon_p}\i\n}}{\Delta v\j} A\n \prt{1+c_0 \Delta v\j} \\ \leq & A\n \bigg( 1+ \Delta t c_0 \prt{1+\C}\bigg)= A\np.
\end{align*}
Thus, we obtain that for all $\prt{i,j}\in \I\times\mathbb{Z}$,
\[ \frac{p\ij\np}{R_h\prt{x\i,v\j}} \leq A\np. \]
By definition of $A^n$, we have for all  $n\in \{0,\ldots , \lceil T/\Delta t\rceil \}$ that $A\np < e^{c_0 \prt{1+\C} T}.$ Therefore, as in the continuous case, at the discrete level, there exists $C_T>0$ depending on $\alpha, \lambda_1, \lambda_2, \C, T$ such that
\[ p_h \prt{t,x,v}\leq C_T R_h\prt{x,v}, \] 
for $\prt{t,x,v}\in Q_T$.
Moreover, we have that
\begin{align*}
\int _\R R_h\prt{x,v}\,\dx v & = C \sum_{j\in\Z} \frac{\Delta v_j}{1+\abs{v\j}\lone+\abs{x\i}\ltwo} \leq 2C \sum_{j\in\N} \frac{h}{1+\prt{\alpha\brk{j-1} h}\lone+\abs{x\i}\ltwo} \\
& \leq\frac{2C}{\alpha} \sum_{j\in\N} \frac{\Delta v\jm}{1+\prt{\alpha \brk{j-1}h}\lone} \leq \frac{2C}{\alpha^{1+\lambda_1}}\int_0^\infty \frac{1}{1+v\lone}\dx v.
\end{align*}
Now, we have that
\begin{align*}
\int_0^\infty \frac{1}{1+v\lone}\dx v&= \int_0^1 \frac{1}{1+v\lone}\dx v + \int_1^\infty \frac{1}{1+v\lone}\dx v \\ & \leq 1+ \int_1^\infty \frac{1}{v\lone}\dx v = 1+ \frac{1}{\lambda_1-1}<\infty.
\end{align*}
Thus, we obtain that
\[ \zeta_h\prt{t,x} = \int_\R p_h \prt{t,x,v}\,\dx v \leq C_T \bigg( \frac{2C}{\alpha^{1+\lambda_1}} \int_0^\infty\frac{\dx v}{1+\abs{v}\lone} \dx v \bigg) < +\infty, \]
for $\zeta_h\in\{\rho_h, \eta_h\}.$
\end{proof}

\begin{rmk}
    \label{rem:ChoiceOfvh}
   With Proposition \ref{prop:Boundedness_TimeSpaceVelocity}, we can now choose an appropriate $v_h$ which is applied for the cut-off in the velocity domain in Section \ref{sec:DefDiscrFunc}. To this end, let $\varepsilon>0$. We want to choose $v_h>0$ such that
    \[ \int_{\R\backslash(-v_h,v_h)} p_h \prt{t,x,v} \dx v < \varepsilon, \]
     with $p_h \in \{f_h, g_h\}$ and $(t,x) \in \Omega_T$. Indeed, as in the proof of Proposition~\ref{prop:Boundedness_TimeSpaceVelocity}, we write 
     \begin{align*}
      \int_{\R\backslash(-v_h,v_h)} p_h \prt{t,x,v} \dx v &\leq  \int_{\R\backslash(-v_h,v_h)} C_T R_h\prt{x,v} \dx v
      \leq C_T \frac{2}{\alpha^{1+\lambda_1}} \int_{v_h}^\infty \frac{1}{1+v^\lambda_1}dv\\
      &\leq   \frac{2C_T}{\alpha^{1+\lambda_1}} \frac{1}{\lambda_1-1}v_h^{-\lambda_1+1},
     \end{align*}
for $t\in(0,T)$. Then we choose 
 \[
 v_h = \prt*{ \frac{2C_T}{\alpha^{1+\lambda_1}} \frac{1}{\lambda_1-1}\varepsilon^{-1}}^{\frac{1}{\lambda_1+1}}.
 \]
Such a choice of $v_h$ guarantees that the mass outside of $(-v_h,v_h)$ is less than $\varepsilon$ for all $(t,x)\in \Omega_T$. If $\varepsilon$  is much smaller than the machine epsilon, then the error that we are making by cutting off the functions $f$ and $g$ in the velocity domain is minimal with respect to the computational error. For the purpose of establishing the convergence result, Theorem \ref{th:convergence}, however, we need to impose that $v_h\to \infty$, as foreshadowed in the introduction. Indeed, the rate needed in our proof, cf.\ equation \eqref{eq:eh_final}, requires that $v_h h^{1/2} \to0$, as $h\to 0$. This requirement can be dropped, however, if the support of the solution is contained in a compact set, cf.\ Remark \ref{rem:error_estimate}. In the same vein, we may extend our result to unbounded spatial domains, as well, by imposing further restrictions on the rate at which $L\to \infty$.
\end{rmk}

\section{Convergence of the scheme} \label{sec:Convergence}
In this section, we will discuss the convergence behaviour of the scheme described in Section~\ref{sec:num_meth} and see that it converges to a weak solution of the original system \eqref{eq:main_system}. Before we prove the convergence of the scheme, we shall introduce an interpolation of the interaction terms that we will use in the proof.
\begin{definition}\label{def:UpsInter}	
	We define the interpolation of the interaction terms as 
	\begin{align*}
		\upsrhoh \prt{t,x} &:= \int_\Omega \pii' \prt{x-y} \rho_h \prt{t,y} \dx{y} + \int_\Omega \pij' \prt{x-y} \eta_h \prt{t,y}\dx{y}, \\
		\upsetah \prt{t,x} &:= \int_\Omega \pjj' \prt{x-y} \eta_h \prt{t,y} \dx{y} + \int_\Omega \pji' \prt{x-y} \rho_h \prt{t,y}\dx{y}.
	\end{align*}
\end{definition}

We are now in the position to prove Theorem \ref{th:convergence}, and we recall the statement here for the reader's convenience. 

\begin{theorem}[Convergence of the scheme] 
\label{th:convergence}
Let $p_0\in\{f_0,g_0\}$ be non-negative and bounded from above by a function $R$ of the form
\[ 
R\prt{x,v}\coloneqq \frac{C}{1+\abs{v}\lone+\abs{x}\ltwo}, 
\]
with $\lambda_1 >1$, $\lambda_2\geq 1$ and $\lambda_2\leq \lambda_1$, for some $C>0$, $(x,v)\in Q$.
Let $K_{ij} \in W^{2,\infty} \prt{\Omega}$, for $i,j \in \{1,2\}.$ Assume that there exists $\xi \in \prt{0,1}$ such that $\Delta t$ satisfies the condition
\begin{equation*}
	\Delta t\leq \frac{(1-\xi)\alpha}{v_h+\C}\, h,
\end{equation*}
for $\C \coloneqq \max \{ \norm{\pii'}_{L^\infty \prt{\Omega}} + \norm{\pij'}_{L^\infty \prt{\Omega}}, \norm{\pjj'}_{L^\infty \prt{\Omega}} + \norm{\pji'}_{L^\infty \prt{\Omega}}  \}$ and $v_h h^{1/2}\to 0$, as $h\to 0$.  Denoting by $\prt{f_h, g_h}$ the numerical solution to the scheme \eqref{eq:finite-volume-method},  we have
\[
f_h\rightharpoonup f, \qquad \text{and} \qquad g_h \rightharpoonup g,
\]
in $L^\infty \prt{Q_T}$ weakly-$\ast$ as $h\to 0,$ where $\prt{f,g}$ is a weak solution to system \eqref{eq:main_system}, in the sense of Definition \ref{def:weak_sol}.
\end{theorem}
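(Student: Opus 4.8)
The plan is to follow the standard Lax–Wendroff-type strategy for proving convergence of finite volume schemes: extract a weakly-$\ast$ convergent subsequence via the a priori bounds, then pass to the limit in a discrete weak formulation obtained by multiplying the scheme by the cell averages of a test function. First I would note that Proposition~\ref{prop:Boundedness_TimeSpaceVelocity} gives $0\le p_h \le C_T R_h \le C_T\cdot C$ uniformly, so $(f_h,g_h)$ is bounded in $L^\infty(Q_T)$ and, by Banach–Alaoglu, along a subsequence $f_h \rightharpoonup f$, $g_h\rightharpoonup g$ weakly-$\ast$; the same proposition controls the velocity tails, which is what lets the truncation at $v_h$ disappear in the limit. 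The macroscopic densities $\rho_h,\eta_h$ are likewise bounded in $L^\infty(\Omega_T)$, and one should check that $\rho_h \rightharpoonup \rho := \int_\R f\,\dx v$ (and similarly $\eta_h\rightharpoonup\eta$), using the tail bound to justify exchanging the velocity integral with the weak limit.

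Next I would fix $\varphi\in C_c^\infty([0,T)\times Q)$, set $\varphi_{i,j}^n := \varphi(t^n,x_i,v_j)$, multiply the scheme \eqref{eq:discrete_scheme} by $\Delta t\,|C_{i,j}|\,\varphi_{i,j}^n$ (or rather $\varphi_{i,j}^n$ times $|C_{i,j}|$ and sum the telescoping in $n$), and perform summation by parts in $n$, $i$, $j$. Using the periodic boundary conditions in $x$ and the no-flux conditions in $v$, this produces a discrete identity of the form
\begin{align*}
	&\sum_{n,i,j} |C_{i,j}|\, f_{i,j}^n\Big(\tfrac{\varphi_{i,j}^{n}-\varphi_{i,j}^{n-1}}{\Delta t}\Big)\Delta t
	+ \sum_{n,i,j}\Delta t\,\Delta v_j\, \Fx\text{-terms}\cdot(\text{discrete }\partial_x\varphi)\\
	&\qquad + \sum_{n,i,j}\Delta t\,\Delta x_i\,\Fv\text{-terms}\cdot(\text{discrete }\partial_v\varphi)
	+ \sum_{i,j}|C_{i,j}|\, f_{i,j}^0\,\varphi_{i,j}^0 = \mathcal{E}_h,
\end{align*}
where $\mathcal{E}_h$ collects the consistency errors coming from (i) replacing $v$ by $v_j$ and $\Upsilon_f(t,x)$ by $(\Upsilon_f)_i^n$ in the fluxes, (ii) replacing $\varphi$ and its derivatives by difference quotients on the mesh, and (iii) the velocity truncation. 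Each of these is estimated by Taylor expansion of $\varphi$ (giving $O(h)$ and $O(\Delta t)$ factors times $\|\varphi\|_{C^2}$ and the conserved mass), plus the tail estimate for the truncation term; the hypothesis $K_{ij}\in W^{2,\infty}$ is what makes $(\Upsilon_f)_i^n$ a second-order-consistent approximation of $\Upsilon_f$ evaluated at cell centres. One then checks $\mathcal{E}_h \to 0$ as $h\to 0$ — this is where the rate condition $v_h h^{1/2}\to 0$ enters, since the velocity-flux consistency error carries a factor growing like $v_h$ against a gain of order $h$ (or $h^{1/2}$ after Cauchy–Schwarz), cf.\ the reference to \eqref{eq:eh_final} in Remark~\ref{rem:ChoiceOfvh}.

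The remaining point is to identify the limits of the flux sums with the correct integrals. The discrete time-derivative and transport terms converge to $\int_{Q_T} f(\partial_t\varphi + v\,\partial_x\varphi)$ directly from $f_h\rightharpoonup f$ weakly-$\ast$ against the (strongly convergent) piecewise-constant interpolants of $\partial_t\varphi$, $v\,\partial_x\varphi$, and $\sum_{i,j}|C_{i,j}|f_{i,j}^0\varphi_{i,j}^0 \to \int_Q f_0\,\varphi(0,\cdot)$ since the initial data are discretised by cell averages. The genuinely nonlinear term is $\sum \Delta t\,\Delta x_i\,(\Upsilon_f)_i^n f_{i,j}^n\,(\text{discrete }\partial_v\varphi)$, whose limit must be $\int_{Q_T}\Upsilon_f\, f\,\partial_v\varphi$: here I would introduce the interpolant $(\Upsilon_f)_h$ of Definition~\ref{def:UpsInter}, show $(\Upsilon_f)_h$ (and its piecewise-constant-in-$x$ version built from $(\Upsilon_f)_i^n$) converges \emph{strongly} in, say, $L^2(\Omega_T)$ — or at least uniformly — because it is a fixed $W^{1,\infty}$ kernel convolved against the bounded densities $\rho_h,\eta_h$, so equicontinuity in $x$ is automatic and only the weak convergence $\rho_h\rightharpoonup\rho$ is needed to pin down the limit $\Upsilon_f$. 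A product of a strongly convergent sequence and a weakly-$\ast$ convergent sequence converges to the product of the limits, which closes the argument; finally, since every subsequential limit is a weak solution and (by the existence/uniqueness theory for \eqref{eq:main_system}) this solution is unique, the whole family $(f_h,g_h)$ converges. The main obstacle I anticipate is the bookkeeping in the consistency-error estimate $\mathcal{E}_h\to0$, specifically keeping the $v_h$-dependence under control in the velocity fluxes so that the CFL-compatible rate $v_h h^{1/2}\to 0$ suffices; the strong convergence of the interaction term is the second, milder, technical hurdle.
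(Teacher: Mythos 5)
Your overall strategy coincides with the paper's: extract weak-$\ast$ limits from the $L^\infty$ bounds of Proposition \ref{prop:Boundedness_TimeSpaceVelocity}, identify $\zeta=\int_\R p\,\dx v$ via the tail control, obtain strong convergence of the interpolated interaction terms so that the product $(\Upsilon_p)_h\, p_h$ passes to the limit, and then prove weak consistency by testing the scheme against the test function and summing by parts. The only structural differences are cosmetic (point values $\varphi(t^n,x_i,v_j)$ versus the cell integrals $\int_{C_{i,j}^n}\varphi$ used in the paper; $L^2$ or uniform convergence of $(\Upsilon_p)_h$ versus the paper's pointwise-plus-dominated-convergence route to $L^1$), and your closing appeal to uniqueness to upgrade subsequential convergence is an addition the paper does not make.

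The one genuine gap is the step you defer as ``bookkeeping'': the consistency errors are \emph{not} controlled by Taylor expansion and the conserved mass alone. After the Taylor expansions one is left with terms of the form $\sum_{n,i,j}|C_{i,j}^n|\,[v_j]^{\pm}\,|p_{i,j}^n-p_{i\pm1,j}^n|$ and the analogous velocity differences; since $p_h$ carries no BV bound, and since $\sum_j \Delta v_j\,|v_j|\,R_h(x_i,v_j)$ need not even converge for $\lambda_1\le 2$, a direct estimate fails. What closes the argument is a weak-BV estimate: by Cauchy--Schwarz these sums are bounded by $C(v_h^{1/2}+v_h)(\Delta t\, h)^{1/2}\mathcal R^{1/2}$, where $\mathcal R$ is the flux-weighted sum of \emph{squared} differences, and $\mathcal R\le C/\Delta t$ because --- rewriting each squared difference as $|\hat p-p_{i,j}^n|^2 = 2(p_{i,j}^n-\hat p)p_{i,j}^n+\hat p^2 - |p_{i,j}^n|^2$, telescoping the quadratic part, and inserting the scheme \eqref{eq:discrete_scheme_f_h_long} together with the convexity of $s\mapsto s^2$ under the CFL condition \eqref{eq:clf} --- $\mathcal R$ is controlled by the dissipation of the discrete $L^2$ norm (the mechanism of Proposition \ref{prop:convex}). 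This yields $\tilde e_h\le C v_h h^{1/2}$, which is exactly where the hypothesis $v_h h^{1/2}\to0$ is consumed, cf.\ \eqref{eq:eh_final}. You correctly name Cauchy--Schwarz and anticipate the $h^{1/2}$ rate, but without the $L^2$-dissipation input the Cauchy--Schwarz bound does not close; this is the mathematical core of the proof rather than bookkeeping, and it is the main item you would need to supply.
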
 
\begin{proof}
Let $p_h \in \{f_h, g_h \}$ and $\zeta_h \in \{\rho_h, \eta_h\}$ be the respective marcoscopic density. By Proposition \ref{prop:Boundedness_TimeSpaceVelocity} we know that $p_h$ is bounded in $L^\infty (Q_T)$, thus, by Banach-Alaoglu Theorem, we have that, up to a subsequence, there exists a function $p\in L^\infty (Q_T)$ such that
\[ 
	p_h\rightharpoonup p, 
\]
weakly-$\ast$ in $L^\infty\prt{Q_T}$, as $h\to 0$.
We also know that $\zeta_h$ is bounded in $L^\infty (\Omega_T)$, thus, up to a subsequence,
\[ \zeta_h\prt{t,x}\rightharpoonup\zeta \prt{t,x}, \]
weakly-$\ast$ in $L^\infty\prt{\Omega_T}$, as $h\to 0$.
Furthermore, we see that the density $\zeta (t,x)$ is equal to $\int_\R p\prt{t,x,v}\dx v$ a.e.. To prove this claim, let $\psi \in L^1(\Omega_T)$ and $\varepsilon>0$. By the uniform control from above by the function $R_h(x,v)$, we have
\begin{align}
	0 \leq p_h(t,x,v) \leq C_T R_h(x,v),
\end{align}
and therefore
\begin{align}
	\int_{\R\setminus(-M, M)} p_h(t,x,v)  \dx v < \varepsilon/(4 \|\psi\|_{L^1(\Omega_T)}),
\end{align}
whenever $h<H_1$, for some $H_1, M>0$ uniformly in $\Omega_T$, cf. Remark \ref{rem:ChoiceOfvh}. Next, by weak-* convergence, we know that there exists $H_2>0$ such that
\begin{align*}
	&\abs*{\int_{\Omega_T}	\prt*{\int_{-M}^M p_h(t,x,v) \dx v- \int_{-M}^M p(t,x,v) \dx v} \psi(x,t) \dx x \dx t}\\
	&=\abs*{\int_{\Omega_T}\int_{-M}^M	\prt*{p_h(t,x,v) -  p(t,x,v)} \dx v \psi(x,t) \dx x \dx t}\\
	&< \varepsilon/2,
\end{align*}
whenever $h< H_2$. Hence, for $h<\min(H_1, H_2)$, we have
\[
	\abs*{\int_{\Omega_T} \bigg( \zeta_h -\int_\R p\dx v \bigg) \psi \prt{t,x} \dx x \dx t } < \varepsilon,
\]
which proves the claim. Moreover, for $p=f$, for any $t\in(0,T)$, and $x\in \Omega$ fixed, we get
	\begin{align*}
		\prt{\prt{\Upsilon _f}_h - \Upsilon_f}(t,x) &= \int_\Omega \pii' (x-y)\rho_h (t,y)\dx{y} + \int_\Omega \pij' (x-y) \eta_h (t,y)\dx{y} \\
		& \qquad - \int_\Omega \pii' (x-y) \rho (t,y)\dx{y} - \int_\Omega \pij' (x-y) \eta (t,y) \dx{y} \\
		& \longrightarrow  \int_\Omega \pii' (x-y)\rho (t,y)\dx{y} + \int_\Omega \pij' (x-y) \eta (t,y)\dx{y} \\
		& \qquad - \int_\Omega \pii' (x-y) \rho (t,y)\dx{y} - \int_\Omega \pij' (x-y) \eta (t,y) \dx{y} \\
		&=\, 0,
	\end{align*}
	as $h\to 0$, since $\rho_h \rightharpoonup \rho$ and $\eta_h \rightharpoonup \eta$ weakly-$\ast$ in $L^\infty(\Omega)$.
	Consequently, we have pointwise convergence. Additionally, we have strong convergence in $L^1(\Omega_T)$ since
	\begin{align*}
		\norm{ (\Upsilon_f)_h}_{L^\infty(\Omega_T)} &\leq \C  (\norm{\rho_h}_{L^1(\Omega_T)} + \norm{\eta_h}_{L^1(\Omega_T)}) \leq C,
	\end{align*}
	with $C$ independent of $h$. Therefore we can apply Lebesgue's Dominated Convergence Theorem and get $\prt{\Upsilon_f}_h \to \Upsilon_f$ strongly in $L^1(\Omega_T)$. The same argumentation can be done for $p=g$.
	We derive that
	\[
	\int_{Q_T}\prt{\Upsilon_p}_h\, p_h \, \partialv{\varphi} \dxvt \to \int_{Q_T}  \Upsilon_p\, p\, \partialv{\varphi} \dxvt,
	\]
	as $h\to 0$, since $p_h \rightharpoonup p$ weakly-$\ast$ in $L^\infty(Q_T)$, and $\prt{\Upsilon_p}_h \to \Upsilon_p$ strongly in $L^1(\Omega_T)$. 
	Hence, for $\varphi\in C_c^\infty \prt{[0,T)\times Q}$, we see that the weak formulation in sense of Definition \ref{def:weak_sol} of the solution $p_h$ of the scheme converges to the weak formulation of the limit $p$ and
	\[
	\lim_{h\to 0} \int_{Q_T} p_h \bigg( \frac{\partial\varphi}{\partial t}+ v \frac{\partial \varphi}{\partial x} - \prt{\Upsilon_p}_h \frac{\partial \varphi}{\partial v} \bigg) \dtxv = \int_{Q_T} p \bigg( \frac{\partial\varphi}{\partial t}+ v \frac{\partial \varphi}{\partial x} - \Upsilon_p \frac{\partial \varphi}{\partial v} \bigg) \dtxv.
	\]
	It remains to show that the limit of the weak formulation of $p_h$ vanishes, that is, 
	\begin{equation}\label{wf:weakform_limit}
		\int_{Q_T} p_h \bigg( \frac{\partial\varphi}{\partial t}+ v \frac{\partial \varphi}{\partial x} - \prt{\Upsilon_p}_h \frac{\partial \varphi}{\partial v} \bigg) \dtxv + \int_Q p_0\prt{x,v} \varphi\prt{0,x,v}\dxv \to 0 
	\end{equation}
	for $\varphi\in C_c^\infty \prt{[0,T)\times Q}$, as $h\to 0$.
	We show this convergence by going back to the discrete scheme (\ref{eq:discrete_scheme_f_h_long}) and estimating the error between the discrete scheme and the weak formulation for $p_h$.

The following notation will be convenient:
\begin{align*}
	\mathcal{I}_t^h & \coloneqq  \int_{Q_T} p_h \prt{t,x,v} \frac{\partial \varphi}{\partial t} \prt{t,x,v} \dtxv +\int_Q p_0\prt{x,v} \varphi \prt{0,x,v} \dxv, \\
	\mathcal{I}_x^h & \coloneqq  \int_{Q_T} p_h \prt{t,x,v} v \frac{\partial \varphi}{\partial x} \prt{t,x,v} \dtxv, \\
	\mathcal{I}_v^h & \coloneqq  -\int_{Q_T} p_h \prt{t,x,v} \prt{\Upsilon_p}_h (t,x) \frac{\partial \varphi}{\partial v} \prt{t,x,v} \dtxv,
\end{align*}
where $\varphi \in C_c^\infty([0,T) \times Q)$ is arbitrary but fixed throughout. With the compactness from above, it is immediate to see that
\begin{align*}
	\lim_{h\to 0} I_{t}^h =  \int_{Q_T} p \prt{t,x,v} \frac{\partial \varphi}{\partial t} \prt{t,x,v} \dtxv +\int_Q p_0\prt{x,v} \varphi \prt{0,x,v} \dxv,
\end{align*}
as well as
\begin{align*}
	\lim_{h\to 0} I_{x}^h =  \int_{Q_T} p \prt{t,x,v} \, v \frac{\partial \varphi}{\partial x} \prt{t,x,v} \dtxv,
\end{align*}
and
\begin{align*}
	\lim_{h\to 0}\mathcal{I}_v^h = -\int_{Q_T} p \prt{t,x,v} \, \Upsilon_p (t,x) \frac{\partial \varphi}{\partial v} \prt{t,x,v} \dtxv.
\end{align*}
It remains to show \eqref{wf:weakform_limit}, i.e.,
\begin{align}
	\label{eq:conv-to-zero}
	\lim_{h\to 0} \mathcal I_t^h + \mathcal I_x^h + \mathcal I_v^h = 0.
\end{align}
In order to establish this limit, we exploit the discrete scheme (\ref{eq:discrete_scheme_f_h_long}). 
Let us observe that the scheme  \eqref{eq:discrete_scheme_f_h_long} can be rewritten as
\begin{align}
	\label{eq:disc-scheme-conv}
	\begin{split}
	\frac{p\ij\np - p\ij\n}{\Delta t} &= \frac{[v\j]^-}{\Delta x\i }(p\ipj\n - p\ij\n ) + \frac{[v\j]^+}{\Delta x\i }(p\imj\n - p\ij\n )\\[0.75em]
	&\qquad +\frac{\brk{\prt{\Upsilon_p}\i\n}^-}{\Delta v\j} (p\ijm\n - p\ij\n )
	+\frac{\brk{\prt{\Upsilon_p}\i\n}^+}{\Delta v\j} (p\ijp\n - p\ij\n ).
	\end{split}
\end{align}
Multiplying  \eqref{eq:disc-scheme-conv} by
\begin{align*}
	\varphi\ij\n \coloneqq \int_{C\ij\n} \varphi(t,x,v) \dx t \dx x \dx v,
\end{align*}
where $C\ij\n \coloneqq [t\n, t\np)\times C\ij$, and summing over $i \in \I$, $j\in\J$ and $n \in \{0, \ldots , N_T-1\}$, we obtain
\begin{align*}
	\mathcal J_t^h + \mathcal J_x^h + \mathcal J_v^h = 0,
\end{align*}
with
\begin{align*}
	\mathcal J_t^h & \coloneqq \sum_{n,i,j} 
		\frac{p\ij\np - p\ij\n}{\Delta t} \varphi\ij\n,\\
	\mathcal J_x^h &\coloneqq -\sum_{n,i,j} \bigg[
		\frac{[v\j]^-}{\Delta x\i }(p\ipj\n - p\ij\n )\varphi\ij\n  + \frac{[v\j]^+}{\Delta x\i }(p\imj\n - p\ij\n )\varphi\ij\n \bigg],\\
	\mathcal J_v^h &\coloneqq -\sum_{n,i,j} \bigg[ \frac{\brk{\prt{\Upsilon_p}\i\n}^-}{\Delta v\j} (p\ijm\n - p\ij\n ) \varphi\ij\n 
	+\frac{\brk{\prt{\Upsilon_p}\i\n}^+}{\Delta v\j} (p\ijp\n - p\ij\n )\varphi\ij\n \bigg].
\end{align*}
The strategy is to show that 
\[
	\abs*{\mathcal I_t^h + \mathcal J_t^h},\, 
	\abs*{\mathcal I_x^h + \mathcal J_x^h},\, 
	\abs*{\mathcal I_v^h + \mathcal J_v^h},
	\to 0,
\]
as  $h\to 0$, by enforcing condition \eqref{eq:clf}. This will imply \eqref{eq:conv-to-zero} and consequently our desired result \eqref{wf:weakform_limit}.  We proceed term by term.

\subsubsection*{Estimating $\mathcal J_t^h$}
We consider
\begin{align}
	\label{eq:Jth-defn}
	\begin{split}
	\mathcal J_t^h 
	&= \frac1{\Delta t}\sum_{i,j} \sum_{n=0}^{N_T -1} (p\ij\np - p\ij\n)\varphi\ij\n\\
	&= -\frac1{\Delta t}\sum_{i,j} \sum_{n=0}^{N_T -1}p\ij\np ( \varphi\ij^{n+1} -  \varphi\ij\n )  - \frac1{\Delta t}\sum_{i,j} p\ij^0\varphi\ij^0,
	\end{split}
\end{align}
having used integration by parts and the fact that $\varphi\ij^{N_T} = 0$ due to the compact support of $\varphi$. 
Using a Taylor expansion and the definition of $\varphi\ij\n$, we get
\begin{align}
	\label{eq:jij-final}
	\frac1{\Delta t} \prt{ \varphi\ij^{n+1}-\varphi\ij\n} = \int_{C\ij^{n+1}} \bigg[ \frac{\partial \varphi}{\partial t}(t,x,v)  \bigg] \dtxv + \mathcal O(\Delta t^2) \Cij.
\end{align}
Substituting \eqref{eq:jij-final} into \eqref{eq:Jth-defn} yields
\begin{align*}
	\mathcal J_t^h
		&= \sum_{i,j} \bigg\{-  \sum_{n=0}^{N_T-1} \bigg[ \int_{C\ij\np} p\ij\np  \frac{\partial \varphi}{\partial t} \dtxv + |C_{ij}^{n+1}| p\ij\np  \mathcal O(\Delta t)\bigg] -\frac1{\Delta t} p\ij^0 \varphi\ij^0 \bigg\} \\
		&=\sum_{i,j} \bigg\{- \sum_{n=1}^{N_T-1} \int_{C\ij\n} p\ij\n \frac{\partial \varphi}{\partial t} \dtxv - \frac{1}{\Delta t} p\ij^0 \varphi\ij^0\bigg\} + \mathcal O (\Delta t),
\end{align*}
since the test function has compact support and having used the boundedness of the $L^1(Q_T)$-norm. Rearranging the expression of $\mathcal{I}_t^h$, we find
\begin{align*}
	\mathcal J_t^h + \mathcal I_t^h 
	&= \sum\ij \bigg[ -\frac{1}{\Delta t} p\ij^0\varphi\ij^0 + p\ij^0 \int_{C\ij} \int_0^{\Delta t} \frac{\partial \varphi}{\partial t} \dtxv \\
	& \qquad \qquad \qquad \qquad \qquad + \int_{C\ij} p_0 (x,v) \varphi (0,x,v)\dxv  \bigg] + \mathcal O(\Delta t)\\
	&= \sum\ij \bigg[ - \int_{C\ij}p\ij^0 \varphi(0, x, v)\dx x\dx v 
	+ \int_{C\ij} p_0 (x,v) \varphi (0,x,v)\dxv  \bigg] + \mathcal O(\Delta t),\\
\end{align*}
having used a Taylor expansion in time of the test function, $\varphi\ij^{0}$. Thus, we obtain that
\[ \abs{ \mathcal J_t^h + \mathcal I_t^h } \leq C \norm{p_0 -p_h(0)}_{L^1 (Q)} + C\Delta t \to 0, \]
as $h \to 0$.

\subsubsection*{Estimating $\mathcal J_x^h$} Next, let us consider 
\begin{align*}
	\mathcal J_x^h &= \sum_{n,i,j}
		\frac{[v\j]^-}{\Delta x\i }(p\ij\n - p\ipj\n)\varphi\ij\n  + \frac{[v\j]^+}{\Delta x\i }(p\ij\n - p\imj\n)\varphi\ij\n\\
		&= \mathcal J_x^{h,-} + \mathcal J_x^{h,+},
\end{align*}
with 
\begin{equation}
\label{eq:Jxh}
	\mathcal J_x^{h,+} \coloneqq \sum_{n,i,j}   \frac{[v\j]^+}{\Delta x\i } \prt{p\ij\n - p\imj\n} \varphi\ij\n, \quad \text{and}\quad
	\mathcal J_x^{h,-} \coloneqq \sum_{n,i,j}  \frac{[v\j]^-}{\Delta x\i } \prt{p\ij\n - p\ipj\n}  \varphi\ij\n.
\end{equation}
Considering a Taylor expansion of the test function, we have
\[ 
	\intx \varphi\prt{t,x,v}\dx x =\varphi \prt{t,x_{i\pm 1/2},v}\Delta x\i + \mathcal{O}\prt{\Delta x\i^2}. 
\]
We begin by addressing $	\mathcal J_x^{h,+}$. Inserting the Taylor expansion yields
\begin{align}
	\label{eq:Jxplus}
	\mathcal J_x^{h,+} 
	=& \sum_{n,j} \sum_{i=0}^{N_x-1} [v\j]^+ \prt{p\ij\n-p\imj\n} \int_{t\n}^{t\np} \int_{v\jmh}^{v\jh} \varphi\prt{t,x\imh,v}\dx v\dx t+\mathcal E_x^{h,+},
\end{align}
where
\begin{align}
	\label{eq:err-x-plus}
	\abs*{\mathcal E_x^{h,+}} \leq C \sum_{n,i,j} \Cijn [v\j]^+  \abs{p\ij\n-p\imj\n}.
\end{align}
Upon integrating by parts and using the spatial boundary conditions, we get
\begin{align*}
	&\sum_{n,j} [v\j]^+ \bigg[ p_{N_x-1,j}\n   \intt\intv \varphi \prt{t,x_{N_x-3/2},v} \dx v \dx t\\
	&\qquad\qquad  - p_{-1,j}\n \intt\intv \varphi\prt{t,x_{-3/2},v}\dx v\dx t \\ 
	&\qquad\qquad - \sum_{i=-1}^{N_x-2} p\ij\n \intt \intv \prt{ \varphi\prt{t,x\ih,v}-\varphi \prt{t,x\imh,v}} \dx v\dx t\bigg] \\[0.75em]
	&= -\sum_{n,j}\sum_{i=0}^{N_x-1} [v\j]^+ p\ij\n \intt \intv \prt{ \varphi \prt{t,x\ih,v}-\varphi \prt{t,x\imh,v}}\dx v\dx t\\
	&= -\sum_{n,i,j} [v\j]^+ \int_{C\ij\n} p_h(t,x,v) \partialx \varphi(t,x,v) \dx t \dx x \dx v,
\end{align*}
having used the fact that $\varphi$ is compactly supported. Substituting this into \eqref{eq:Jxplus}, we get
\begin{align}
	\label{eq:Jxplus-final}
	\mathcal J_x^{h,+} = -\sum_{n,i,j}[v\j]^+ \int_{C\ij\n} p_h(t,x,v) \partialx \varphi(t,x,v) \dx t \dx x \dx v + \mathcal E_x^{h,+}.
\end{align}
Next, let us address $\mathcal J_x^{h,-}$. We have
\[
	\mathcal J_x^{h,-} 
	= -\sum_{n,j} \sum_{i=0}^{N_x-1} [v\j]^- \prt{p\ipj\n-p\ij\n} \intt\intv \varphi\prt{t,x\ih,v}\dx v\dx t + \mathcal E_x^{h,-},
\]
where 
\begin{align}
	\label{eq:err-x-minus}
	\abs*{\mathcal E_x^{h,-}}\leq C \sum_{n,i,j} \Cijn [v]^- \abs*{p\ipj\n - p\ij\n}.
\end{align}
Estimating the first term of $\mathcal J_x^{h,-}$ in a fashion similar to the one above, we obtain
\begin{align}
	\label{eq:Jxminus-final}
	\mathcal J_x^{h,-} = \sum_{n,i,j} [v\j]^- \int_{C\ij\n} p_{h}(t,x,v) \partialx \varphi(t,x,v) \dx x \dx v \dx t + \mathcal E_x^{h,-}.
\end{align}
Summing \eqref{eq:Jxplus-final} and \eqref{eq:Jxminus-final}, we derive
\begin{align*}
	\mathcal J_x^h &= -\sum_{n,i,j} \int_{C\ij\n}  p_{h}(t,x,v)\, v\j \partialx \varphi(t,x,v) \dx x \dx v \dx t + \mathcal E_x^{h,+} + \mathcal E_x^{h,-}\\
	&=-\sum_{n,i,j} \int_{C\ij\n} p_{h}(t,x,v)\, v \partialx \varphi(t,x,v) \dx x \dx v \dx t + \mathcal E_x^{h,+} + \mathcal E_x^{h,-} + \mathcal O(h).
\end{align*}
Thus, we may conclude our estimate by summarising
\[
	\mathcal J_x^h + \mathcal I_x^h = \mathcal E_x^{h,+} + \mathcal E_x^{h,-} + \mathcal O(h).
\]

\subsubsection*{Estimating $\mathcal J_v^h$}
We consider
\begin{align*}
	\mathcal J_v^h 
	&= \sum_{n,i,j}\frac{\brk{\prt{\Upsilon_p}\i\n}^-}{\Delta v\j} (p\ij\n - p\ijm\n) \varphi\ij\n 
	+\frac{\brk{\prt{\Upsilon_p}\i\n}^+}{\Delta v\j} (p\ij\n - p\ijp\n)\varphi\ij\n\\
	&= \mathcal J_v^{h,-} + \mathcal J_v^{h,+},
\end{align*}
where
\begin{align*}
	\mathcal J_v^{h,+} \coloneqq \sum_{n,i,j}\frac{\brk{\prt{\Upsilon_p}\i\n}^+}{\Delta v\j} (p\ij\n - p\ijp\n)\varphi\ij\n, \quad \text{and} \quad \mathcal J_v^{h,-} \coloneqq \sum_{n,i,j}\frac{\brk{\prt{\Upsilon_p}\i\n}^-}{\Delta v\j} (p\ij\n - p\ijm\n) \varphi\ij\n.
\end{align*}
Again, we proceed by Taylor expanding the test function, i.e.,
\begin{align*}
	\intv \varphi \prt{t,x,v} \dx v = \varphi \prt{t,x,v_{j\pm 1/2}}\Delta v\j + \mathcal{O}\prt{\Delta v\j^2}. 
\end{align*}
Now, let $J\in \N$ such that $\mathrm{supp}(\varphi(t,x, \cdot)) \subset (v_{-J-1/2}, v_{J+1/2}) $. Then we have
\begin{align*}
	\mathcal J_v^{h,-} 
	&=\sum_{n,i}\sum_{j=-J}^{J} \frac{\upsp^-}{\Delta v\j} \prt{p\ij\n-p\ijm\n}  \varphi\ij\n \\ 
	&= \sum_{n,i} \sum_{j=-J}^{J} \upsp^- \prt{p\ij\n-p\ijm\n} \intt \intx \varphi \prt{t,x,v\jmh}\dx x\dx t + \mathcal E_v^{h,-},
\end{align*}
where
\begin{align}
	\label{eq:err-v-minus}
	\abs{\mathcal E_v^{h,-}} \leq C \sum_{n,i,j} \Cijn \brk{\prt{\Upsilon_p}\i\n}^- \abs*{p\ij\n - p\ijm\n}.
\end{align}
By manipulating the first term in $\mathcal J_v^{h,-}$, we get
\begin{align*}
	&\sum_{n,i}\upsp^- \bigg[ p_{i,J}\n \intt \intx \varphi \prt{t,x,v_{J-1/2}}\dx x\dx t \\
	&\qquad\qquad\qquad - p_{i,-J-1}\n \intt \intx \varphi \prt{t,x,v_{-J-3/2}}\dx x\dx t \\
	&\qquad\qquad\qquad  -\sum_{j=-J-1}^{J-1} p\ij\n \intt\intx \prt{\varphi \prt{t,x,v\jh}-\varphi\prt{t,x,v\jmh}}\dx x\dx t \\
	&= -\sum_{n,i} \upsp^- \sum_{j=-J}^{J} p\ij\n \intt \intx \prt{\varphi\prt{t,x,v\jh}-\varphi \prt{t,x,v\jmh}} \dx x\dx t\\
	&= -\sum_{n,i,j} \int_{C\ij\n } p_h(t,x,v) \upsp^-  \partialv{\varphi} (t,x,v)\dx v\dx x\dx t,
\end{align*}
having used the compact support of the test function $\varphi$, as well as the boundary conditions. Next, let us consider
\begin{align*}
	\mathcal J_v^{h,+} =& -\sum_{n,i,j} \frac{\upsp^+}{\Delta v\j} \prt{p\ijp\n-p\ij\n} \varphi\ij\n\\
	=& -\sum_{n,i} \sum_{j=-J}^{J} \upsp^+ \prt{p\ijp\n-p\ij\n} \intt \intx \varphi \prt{t,x,v\jh}\dx x\dx t + \mathcal E_v^{h,+},
\end{align*}
where
\begin{align}
	\label{eq:err-v-plus}
	\abs{\mathcal E_v^{h,+}} \leq C \sum_{n,i,j} \Cijn \upsp^+ \abs*{p\ijp\n - p\ij\n}.
\end{align}
Treating the first term of $\mathcal J_v^{h,+}$ in a way parallel to the one above, we directly get
\begin{align*}
	\mathcal J_v^{h,+} = \sum_{n,i,j} \int_{C\ij\n}  p_h(t,x,v)  \upsp^+ \partialv{\varphi}(t,x,v) \dx v \dx x\dx t + \mathcal E_v^{h,+}.
\end{align*}
In conclusion, we combine both terms of  $\mathcal J_v^{h}$ to get
\begin{align*}
	\mathcal J_v^{h} 
	&= \mathcal J_v^{h,-} + \mathcal J_v^{h,-} \\
	&= \sum_{n,i,j} \int_{C\ij\n}  p_h(t,x,v) \,  (\Upsilon_p)_i^n  \partialv{\varphi}(t,x,v) \dx v \dx x\dx t + \mathcal E_v^{h,-} + \mathcal E_v^{h,+}\\
	&= \sum_{n,i,j} \int_{C\ij\n}  p_h(t,x,v)\,  \prt{\Upsilon_p}_h  \partialv{\varphi}(t,x,v) \dx v \dx x\dx t + \mathcal E_v^{h,-}  +  \mathcal E_v^{h,+} +  \mathcal O(h),
\end{align*}
having used
\begin{align*}
	 &\abs*{\sum_{n,i,j} \int_{C\ij\n}  p_h(t,x,v)  \brk{(\Upsilon_p)_i^n - \prt{\Upsilon_p}_h} \partialv{\varphi}(t,x,v) \dx v \dx x\dx t}\\
	 &\leq C \sum_{n,i,j} \Cijn p\ij\n \abs{(\Upsilon_p)_i^n - \prt{\Upsilon_p}_h}\\
	 &\leq C h,
\end{align*}
since $K\ij\in W^{2,\infty}(\Omega)$. Thus
\begin{align*}
	\mathcal I_v^h + \mathcal J_v^h   = \mathcal E_v^{h,+} +  \mathcal E_v^{h,-}  +  \mathcal O(h).
\end{align*}

\subsubsection*{Combination of all estimates}
Next, we combine all estimates from above and define
\begin{align*}
	e_h & \coloneqq \abs*{(\mathcal J_t^h + \mathcal J_x^h + \mathcal J_v^h) + (\mathcal I_t^h + \mathcal I_x^h + \mathcal I_v^h)}\\
	&\leq \abs*{\mathcal J_t^h + \mathcal I_t^h } + \abs*{\mathcal J_x^h + \mathcal I_x^h} + \abs*{\mathcal J_v^h + \mathcal I_v^h }\\
	&\leq
	\tilde e_h + C\prt*{h + \Delta t + \norm{p_h(0)- p_0}_{L^1(Q)}},
\end{align*}
with 
\[
	\tilde e_h := \abs{\mathcal E_x^{h,+}} + \abs{\mathcal E_x^{h,-}} + 
	\abs{\mathcal E_v^{h,+}} + \abs{\mathcal E_v^{h,-}}.
\]
Using equations \eqref{eq:err-x-plus}, \eqref{eq:err-x-minus}, \eqref{eq:err-v-minus}, \eqref{eq:err-v-plus}, we obtain
\begin{align*}
	\tilde e_h 
	&\leq \sum_{n,i,j} \Cijn \bigg[[v\j]^+  \abs{p\ij\n-p\imj\n} + [v\j]^- \abs*{p\ipj\n - p\ij\n} \\
	&\qquad\qquad\qquad + \brk{\prt{\Upsilon_p}\i\n}^- \abs*{p\ij\n - p\ijm\n} + \upsp^+ \abs*{p\ijp\n - p\ij\n}\bigg]\\
	&\leq h \Delta t \sum_{n,i,j}  \bigg[\Delta v\j [v\j]^+  \abs{p\ij\n-p\imj\n} + \Delta v\j [v\j]^- \abs*{p\ipj\n - p\ij\n} \\
	&\qquad\qquad\qquad + \Delta x\i \brk{\prt{\Upsilon_p}\i\n}^- \abs*{p\ij\n - p\ijm\n} + \Delta x\i \upsp^+ \abs*{p\ijp\n - p\ij\n}\bigg]\\
	&\leq h \Delta t \bigg[ \sum_{n,i,j} \Delta v\j \abs{v\j} + \Delta x\i \abs{(\Upsilon_p)_i^n} \bigg]^{1/2} \\
	&\qquad \quad \times \bigg[ \sum_{n,i,j} \Delta v\j [v\j]^+ \brk{p\ij\n-p\imj\n}^2 + \Delta v\j [v\j]^- \brk{p\ij\n-p\ipj\n}^2 \\
 & \qquad \qquad \qquad + \Delta x\i \upsp ^+ \brk{ p\ij\n-p\ijp\n}^2 + \Delta x\i \upsp ^- \brk{ p\ij\n - p\ijm\n}^2 \bigg]^{1/2}.
\end{align*}
Now, by using the Cauchy-Schwarz inequality, we obtain
\begin{align*}
	\prt*{\sum_{n,i,j} \prt*{ \Delta v\j \abs{v\j} + \Delta x\i \abs{(\Upsilon_p)_i^n} }}^{1/2} 
	&\leq (v_h^{1/2} + \C^{1/2}) \prt*{\sum_{n,i,j} (\Delta v\j + \Delta x\i)}^{1/2} \\
	&\leq  (v_h^{1/2} + \C^{1/2}) \prt*{\frac{8LT}{\alpha}}^{1/2}\prt*{\frac{v_h}{h \Delta t}}^{1/2} ,
\end{align*}
and thus we have
\begin{align}
	\label{eq:conv-estimate-interm}
	\tilde e_h 
	&\leq  C \prt*{Cv_h^{1/2} + v_h} \Delta t^{1/2} h^{1/2} \mathcal R^{1/2},
\end{align}
where
\begin{align*}
	\mathcal R 
	&\coloneqq \sum_{n,i,j} \Delta v\j [v\j]^+ \abs*{p\imj\n - p\ij\n}^2 + \Delta v\j [v\j]^- \abs*{p\ipj\n - p\ij\n}^2 \\
	&\quad \qquad + \Delta x\i \upsp ^+ \abs*{p\ijp\n - p\ij\n}^2 + \Delta x\i \upsp ^- \abs*{p\ijm\n -  p\ij\n}^2.
\end{align*}
Using the fact that
\begin{align*}
	\abs*{\hat p - p\ij\n}^2 = 2 \prt*{p\ij\n - \hat p} p\ij\n + \hat p^2 - \abs*{p\ij\n}^2, 
\end{align*}
in particular for $\hat p\in\set{p_{i\pm 1, j}^n, p_{i, j\pm 1}}$, we may rewrite $\mathcal R$ such that
\begin{align*}
	\mathcal R 
	&= 2 \sum_{n,i,j} \bigg[ \Delta v\j [v\j]^+ \brk{p\ij\n-p\imj\n} p\ij\n + \Delta v\j [v\j]^- \brk{p\ij\n-p\ipj\n} p\ij\n \\ 
	& \qquad \qquad + \Delta x\i \upsp^- \brk{p\ij\n-p\ijm\n} p\ij\n + \Delta x\i \upsp^+ \brk{p\ij\n-p\ijp\n} p\ij\n \bigg] \\[0.75em]
	& \quad + \sum_{n,i,j} \bigg[ \Delta v\j [v\j]^+ \prt*{ \abs{p\imj\n}^2 - \abs{p\ij\n}^2 } + \Delta v\j [v\j]^- \prt*{ \abs{p\ipj\n}^2 - \abs{p\ij\n}^2 }\\ 
	& \qquad + \Delta x\i \upsp^- \prt*{\abs{p\ijm\n}^2 - \abs{p\ij\n}^2} + \Delta x\i \upsp^+ \prt{ \abs*{p\ijp\n}^2 - \abs{p\ij\n}^2 }\bigg]. 
\end{align*}
We observe that the last summation contains telescopic sums such that, indeed,
\begin{align*}
	\mathcal R 
	&\leq 2 \sum_{n,i,j} p\ij\n \bigg[ \Delta v\j [v\j]^+ \brk{p\ij\n-p\imj\n} + \Delta v\j [v\j]^- \brk{p\ij\n-p\ipj\n} \\ 
	& \qquad \qquad \qquad + \Delta x\i \upsp^- \brk{p\ij\n-p\ijm\n}+ \Delta x\i \upsp^+ \brk{p\ij\n-p\ijp\n} \bigg] \\[0.75em]
	& \qquad + \sum_{n,i} \bigg[ \Delta x\i \upsp ^+ \prt{\abs{p\n_{i,-J}}^2-\abs{p\n_{i,J+1}}^2} + \Delta x\i \upsp ^- \prt{ \abs{p\n_{i,-J-1}}^2 - \abs{p\n_{i,J}}^2 } \bigg],
\end{align*}
where we factored out a $p\ij\n$ in the first term. Using the scheme \eqref{eq:disc-scheme-conv} we see that 
\begin{align*}
	\mathcal R \leq 2 \sum_{n,i,j} \Cij p\ij\n \frac{p\ij\np - p\ij\n}{\Delta t} + \frac{C}{\Delta t},
\end{align*} 
where the last term comes from bounding the boundary terms, i.e., the second sum in the previous equation. By convexity of $s\mapsto s^2$, we can estimate further to get
\begin{align}
	\label{eq:R-final}
	\mathcal R \leq \sum_{n,i,j}\frac{\Cij}{\Delta t} \prt*{ \big(p\ij\np\big)^2 - \big(p\ij\n)^2} + \frac{C}{\Delta t} \leq \frac{C}{\Delta t}.
\end{align}
Substituting \eqref{eq:R-final} into \eqref{eq:conv-estimate-interm}, we finally obtain
\begin{align*}
	\tilde e_h \leq C \prt*{Cv_h^{1/2} + v_h} h^{1/2},
\end{align*}
similar to the strategy of the weak-BV estimate in \cite{filbet}. Therefore, we have established 
\begin{align}
	\label{eq:eh_final}
	e_h \leq C \prt*{h^{1/2} + \Delta t + \norm{p_h(0) - p_0}_{L^1(Q)} + v_h h^{1/2}},
\end{align}
and thus
\[
	\int_{Q_T} p_h \bigg(\partialt \varphi + v \partialx \varphi - \prt{\Upsilon_p}_h \frac{\partial \varphi}{\partial v} \bigg) \dx t \dx x \dx v + \int_Q p_0\prt{x,v} \varphi\prt{0,x,v}\dx x \dx v \to 0, 
\]
as $h\to 0$, under condition \eqref{eq:clf} and for suitably chosen $v_h$ such that $v_h h^{1/2} \to 0$. Then the limit $\prt{f,g}$ of $\prt{f_h,g_h}$ is a weak solution to system \eqref{eq:main_system}, in the sense of Definition \ref{def:weak_sol}.
\end{proof}

\begin{rmk}[Equidistant meshes]
\label{rem:equidistant}
It is worthwhile pointing out that the rate of convergence in \eqref{eq:eh_final} can be improved if space and velocity are discretised equidistantly, i.e., there are $\Delta x, \Delta v>0$, such that $\Delta x_i=\Delta x$ and $\Delta v_j = \Delta v$, for all indices $i,j$. Indeed, this can be seen when we revisit the estimation of, e.g., the first equation in \eqref{eq:Jxh}. Assuming that $\Delta x_i = \Delta x$, we can estimate
\begin{align*}
	\mathcal J_x^{h,+} &= \sum_{n,j} \sum_{i=0}^{N_x-1}   \frac{[v\j]^+}{\Delta x} \prt{p\ij\n - p\imj\n} \varphi\ij\n\\
	&= - \sum_{n,j} \prt*{ \sum_{i=0}^{N_x-1} \frac{[v\j]^+}{\Delta x} \prt{\varphi\ipj\n - \varphi\ij\n} p\ij\n  + \frac{[v\j]^+}{\Delta x} \varphi_{0,j}^n p_{-1,j}^n},
\end{align*}
having used summation by parts and the fact that the test function is compactly supported. Note that the final term is  of order $\mathcal O(\Delta x)$. Hence
\begin{align*}
	\mathcal J_x^{h,+} &= - \prt*{\sum_{n,j} \sum_{i=0}^{N_x-1}   \frac{[v\j]^+}{\Delta x} p_{ij}^n \int_{C_{i,j}^n}(\varphi(t, x+ \Delta x, v) - \varphi(t, x, v))\dxvt} + \mathcal O(\Delta x)\\
	&= - \prt*{\sum_{n,j} \sum_{i=0}^{N_x-1}   [v\j]^+ p_{ij}^n \int_{C_{i,j}^n}\partial_x \varphi(t,x,v)  + \mathcal O(\Delta x)\dxvt}  + \mathcal O(\Delta x)\\
	&= - \prt*{\sum_{n,j} \sum_{i=0}^{N_x-1}    \int_{C_{i,j}^n}p_h(t,x,v) \, [v\j]^+  \partial_x \varphi(t,x,v) \dxvt}  + \mathcal O(\Delta x).
\end{align*}
This expression is almost identical to \eqref{eq:Jxplus-final}, except that now the error term, $\mathcal E_x^{h,+}$ (contributing an order $h^{1/2}$ in  \eqref{eq:eh_final}) is replaced by $\mathcal O (\Delta x)$ (i.e., of order $h$). The same type of estimate with similar contribution in the error term holds for $\mathcal I_x^{h,-},\mathcal I_v^{h,+}, \mathcal I_v^{h,-}$.
\end{rmk}

\begin{rmk}[Error estimates]
\label{rem:error_estimate}
Let $f_0, g_0 \in C^2(Q)$ be non-negative such that $\mathrm{supp}(p_0(x, \cdot)) \subset (-v_h, v_h)$, for $p_0\in \{f_0, g_0\}$. Moreover, assume that the CFL condition \eqref{eq:clf} is satisfied and that $K_{ij} \in W^{2,\infty} \prt{\Omega}$, for $i,j \in \{1,2\}.$
By \cite{CFI2022}, we know that the weak solutions in the sense of Definition \ref{def:weak_sol} to system \eqref{eq:main_system} are $\prt{f,g} \in C([0,T]; C^1(Q)^2)$ and remain compactly supported. From the equation, it follows that $f$ and $g$ are also Lipschitz in time and the strategy of the proof of \cite[Thm 5.1]{filbet} can be applied. Indeed, choosing $\varphi p\in W^{1,\infty}(Q_T)$ as a test function in the equation, we can establish the following convergence result
	\begin{equation}
	\begin{aligned}
	\label{eq:Error_estimate}
		&\norm{f - f_h }_{L^2(Q_T)}^2 +\norm{g - g_h }_{L^2(Q_T)}^2 \\ \quad & \leq C\prt*{\Delta t + h^{1/2} + \norm{f_0-f_h(0)}_{L^2(Q)}+\norm{g_0-g_h(0)}_{L^2(Q)}},
		\end{aligned}
	\end{equation}
    for some $C>0$ depending on $T$, $L$, $v_h$, $\C$, $\alpha$, $\lambda_1$, $\lambda_2$, $\norm{f}_{L^\infty \prt{Q_T}}$ and $\norm{g}_{L^\infty \prt{Q_T}}$. As a consequence of the compact support of the solution, note that we do not require $v_h \to \infty$, as $h\to 0$. Rather, it can be assumed to be fixed in a way that $\mathrm{supp}(p(t,x,\cdot)) \subset (-v_h,v_h)$.
    
    In light of Remark \ref{rem:equidistant}, we point out that the convergence order $h^{1/2}$ can be increased to $h$, should the space-time discretisation be equidistant.
\end{rmk}

\section{Numerical experiments}
\label{sec:experiment}
	Our next goal is to show the experimental order of convergence (EOC) in numerical examples. Since, to the best of our knowledge, there is no analytical solution to system \eqref{eq:main_system}, we  lack a reference solution for the computation of the error. Hence, we choose an approximated solution on a much finer grid as a reference to compute the errors. Note that, from now on, we compute approximations of the error, not the error itself.
	
	Let us consider the following interaction potentials
	\begin{align*}
	\pii(x) &\coloneqq \frac{x^2}{2}, &\pij(x) &\coloneqq \frac{x^2}{8},\\
	\pjj(x) &\coloneqq \frac{x^2}{2}, &\pji(x) &\coloneqq \frac{x^2}{8},
	\end{align*}
	corresponding to strong self-attraction and mild cross-attraction.
	We choose $L = 1$ and go to $T=1$ when we estimate the convergence in time and $T=1.375$ for the convergence in space. For this small time span, we can choose $v_h=5$. This allows us to create a fine mesh in both space and velocity to analyse the convergence in time and in phase space. As initial condition we set
	\begin{align*}
	f_0 &\coloneqq \begin{cases}
	\frac{99}{101}(0.5 + 0.5 \sin(\pi x)), &\text{ if } -1 \leq v \leq 1,\\
	\frac{99}{101 }(0.5 + 0.5 \sin(\pi x))\frac{1}{\abs{v}^{100}}, & \text{ else},
	\end{cases}\\
	g_0 &\coloneqq \begin{cases}
	\frac{99}{101}(0.5 - 0.5 \sin(\pi x)), &\text{ if } -1 \leq v \leq 1,\\
	\frac{99}{101}(0.5 - 0.5 \sin(\pi x))\frac{1}{\abs{v}^{100}}, & \text{ else}.
	\end{cases}
	\end{align*}

	In order to study the EOC we use different mesh refinements in time, space, and velocity. The mesh $\mathcal{T}_\ell^t$ at level $\ell\in \N$ is an equidistant discretisation of the interval $[0,T]$ with $\Delta t = 10^{-3}/2^{\ell}$. In other words, the time step $\Delta t$ is halved going from one level to the next. Concerning the phase-space discretisation, we define the meshes $\mathcal{T}_{\ell}^{xv}$ on each level $\ell\in \N$ by discretising space uniformly with $\Delta x = 2^{1-\ell} / 3$. The velocity domain is first split into an inner segment, $(-v_h/4, v_h/4)$, discretised uniformly with $\Delta v = 2^{-\ell-1}$, and an outer segment, $[-v_h, -v_h/4)\cup(v_h/4, v_h]$, discretised with $\Delta v = 15/2^{\ell+1}$.

	\subsection{Convergence in time}\label{sec:Con_time}
	First, we have a look at the convergence in time. For the mesh discretisation in time we consider a sequence of meshes $\{\mathcal{T}_{\ell}^t\}_{\ell=1}^{6}$, as introduced above.	 As reference solution we use $\prt{f_{\Delta t}^{\text{ref}},g_{\Delta t}^{\text{ref}}}$ computed on $\mathcal{T}_{8}^t$ with $\ell=8$. Additionally, we discretise space and velocity using the mesh $\mathcal{T}_{7}^{xv}$. This ensures that the initial condition is well discretised by the finer mesh in the middle. 
	
	Next, we would like to analyse the convergence of the solution for the mesh sequence $\{\mathcal{T}_{\ell}^t\}_{\ell=1}^{6}.$ For the numerical error we choose
	\begin{align*}
	\text{err}^1_\ell &\coloneqq \norm{f_{\Delta t}^{\text{ref}}-f_{\Delta t}}_{L^1(Q_T)}+\norm{g_{\Delta t}^{\text{ref}}-g_{\Delta t}}_{L^1(Q_T)},\\
	\text{err}^2_\ell &\coloneqq \norm{f_{\Delta t}^{\text{ref}}-f_{\Delta t}}^2_{L^2(Q_T)}+\norm{g_{\Delta t}^{\text{ref}}-g_{\Delta t}}^2_{L^2(Q_T)},
	\end{align*}
	where  $\prt{f_{\Delta t},g_{\Delta t}}$ is the piecewise constant solution of the discrete scheme \eqref{eq:finite-volume-method} at level $\ell$ and $\prt{f_{\Delta t}^{\text{ref}},g_{\Delta t}^{\text{ref}}}$ is the solution at the reference level $\ell=8$. \\	
	\begin{minipage}{.475\textwidth}
		To estimate the order of convergence we use the EOC, defined as
		\[
		\text{EOC} = \frac{\ln(\text{err}_{\ell-1}) -\ln(\text{err}_{\ell})}{\ln(\Delta t_{\ell-1 }) -\ln(\Delta t_{\ell })}.
		\]
		The associated $L^1(Q_T)$-errors (resp. squared $L^2(Q_T)$-errors) are presented in Table \ref{tab:Delta_conv_E1} (resp.  in Table \ref{tab:Delta_conv_E2}). We see that the $L^1(Q_T)$-errors converge with an experimental order $1$ with respect to $\Delta t$, cf. Table \ref{tab:Delta_conv_E1}, whereas the squared $L^2(Q_T)$-errors converge with an experimental order $2$, see Table \ref{tab:Delta_conv_E2}. This is visualised in the log-log plot Figure \ref{pic:convergence_E1}, which depicts the error vs. time step size. A second order gradient triangle is superimposed as reference. Note that, compared to the next section, the error in the time discretisation appears to be dominant.
	\end{minipage}
	\begin{minipage}{.525\textwidth}
		\centering
		\begin{tikzpicture}
		\begin{axis}[
		height=.65\textwidth,
		width=.65\textwidth,
		axis x line=bottom,
		axis y line=left,
		tick align=outside,
		tickpos=left,
		xlabel=$\Delta t$,
		xmax =5e-4,
		ymax = 62850,
		xmode=log,
		ymode=log,
		]
		\addplot table {
			0.0005 62808.2825733932
			0.00025 14687.9714253431
			0.000125 3473.51526208418
			0.0000625 804.564471466556
			0.00003125 174.563011261167
			0.000015625 32.1319637587428
		};
		\addplot[mark=none] table {
		0.000125	14800
		0.00003125		925
		0.00003125		14800
		0.000125		14800		
	};
		\end{axis}
		\end{tikzpicture}
		
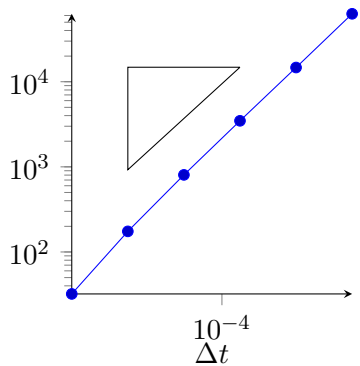
\captionof{figure}{Squared $L^2(Q_T)$-errors (blue) convergence in $\Delta t$ with order 2 (triangle) for fixed $h$.}\label{pic:convergence_E1}
	\end{minipage}
	\begin{center}
		\begin{table}[h]
			\begin{tabular}{c|c|r|c|c|c|c|c}
				$\ell$	& \#$xv$-cells&	 \#$txv$-cells&		$\Delta t$	&$\alpha h$& $h$&$\text{err}^1_\ell$ &	EOC\\
				\hline
			1&	294912&	5.90$\times 10^{8}$& 5$\times 10^{-4}$&	5.21$\times 10^{-3}$&	5.86$\times 10^{-2}$&	7.54033$\times 10^{1}$&	-\\
			2&	294912&	1.18$\times 10^{9}$&	2.5$\times 10^{-4}$&		5.21$\times 10^{-3}$&	5.86$\times 10^{-2}$ &	3.70549$\times 10^{1}$&	1.02\\
			3&	294912&	2.36$\times 10^{9}$&	1.25$\times 10^{-4}$&		5.21$\times 10^{-3}$&	5.86$\times 10^{-2}$ &	1.81502$\times 10^{1}$&	1.03\\
			4&	294912&	4.72$\times 10^{9}$&	6.25$\times 10^{-5}$&		5.21$\times 10^{-3}$&	5.86$\times 10^{-2}$ &	8.76259$\times 10^{0}$&	1.05\\
			5&	294912&	9.45$\times 10^{9}$&	3.13$\times 10^{-5}$&		5.21$\times 10^{-3}$&	5.86$\times 10^{-2}$&	4.08463$\times 10^{0}$&	1.10\\
			6&	294912&	1.89$\times 10^{10}$&	1.56$\times 10^{-5}$&		5.21$\times 10^{-3}$&	5.86$\times 10^{-2}$&	1.74958$\times 10^{0}$&	1.22
			\end{tabular}
			\caption{$L^1(Q_T)$-error convergence table of the discrete scheme \eqref{eq:finite-volume-method} in $\Delta t$.}\label{tab:Delta_conv_E1}
		\end{table}
	\end{center}
	\begin{center}
		\begin{table}[h]
			\begin{tabular}{c|c|r|c|c|c|c|c}
				$\ell$	& \#$xv$-cells&	 \#$txv$-cells&		$\Delta t$	&$\alpha h$& $h$&$\text{err}^2_\ell$ &	EOC\\
				\hline
					1&	294912&	5.90$\times 10^{8}$& 5$\times 10^{-4}$&5.21$\times 10^{-3}$&	5.86$\times 10^{-2}$&	6.28083$\times 10^{4}$ 	&	-\\
				2&	294912&	1.18$\times 10^{9}$&2.5$\times 10^{-4}$&5.21$\times 10^{-3}$&	5.86$\times 10^{-2}$ &1.46880$\times 10^{4}$	&	2.10				\\
				3&	294912&	2.36$\times 10^{9}$&1.25$\times 10^{-4}$&5.21$\times 10^{-3}$&	5.86$\times 10^{-2}$ & 3.47352$\times 10^{3}$	&2.08				\\
				4&	294912&	4.72$\times 10^{9}$&6.25$\times 10^{-5}$&5.21$\times 10^{-3}$&	5.86$\times 10^{-2}$ &	8.04564$\times 10^{2}$	&2.11				\\
				5&	294912&	9.45$\times 10^{9}$&3.13$\times 10^{-5}$&5.21$\times 10^{-3}$&	5.86$\times 10^{-2}$&1.74563$\times 10^{2}$	&	2.20				\\
				6&	294912&	1.89$\times 10^{10}$&1.56$\times 10^{-5}$&5.21$\times 10^{-3}$&	5.86$\times 10^{-2}$&3.21320$\times 10^{1}$	&	2.44			
			\end{tabular}
					\caption{Squared $L^2(Q_T)$-error convergence table of the discrete scheme \eqref{eq:finite-volume-method} in $\Delta t$.}\label{tab:Delta_conv_E2}
		\end{table}
	\end{center}
	
	\subsection{Convergence in space and velocity}
	Let us now have a look at the convergence in space and velocity. Hence, we fix a time step, $\Delta t = 5\times 10^{-5}$, and consider the sequence of meshes $\{\mathcal{T}_{\ell}^{xv}\}_{\ell=1}^{7}$ in space and velocity, as introduced above. Compared to the previous subsection, we now use a new reference solution on the fine phase-space grid, $\mathcal{T}_{9}^{xv}$, denoted by $(f_h^{\text{ref}},g_h^{\text{ref}})$.
For the numerical errors we define
	\begin{align*}
	\text{err}^1_\ell \coloneqq \norm{f_h^{\text{ref}}-f_h}_{L^1(Q_T)}+\norm{g_h^{\text{ref}}-g_h}_{L^1(Q_T)},\\
	\text{err}^2_\ell \coloneqq \norm{f_h^{\text{ref}}-f_h}^2_{L^2(Q_T)}+\norm{g_h^{\text{ref}}-g_h}^2_{L^2(Q_T)},
	\end{align*}
	where  $\prt{f_h,g_h}$ is the piecewise constant solution of the discrete scheme \eqref{eq:finite-volume-method} for the discretisation at level $\ell$ and $\prt{f_h^{\text{ref}},g_h^{\text{ref}}}$ the solution at level $\ell=9$. Following this scheme we compute the $L^1(Q_T)$-errors and squared $L^2(Q_T)$-errors as presented in Table \ref{tab:h_conv_E1} and Table \ref{tab:h_conv_E2}. 
		\begin{center}
		\begin{table}[h]
			\begin{tabular}{c|c|c|c|c|c|c|c}
				$\ell$	& \#$xv$-cells&	 \#$txv$-cells&		$\Delta t$	&$\alpha h$& $h$&$\text{err}^1_\ell$ &	EOC\\
				\hline
			1	& 72	&1.98$\times 10^{6} $&5$\times 10^{-5}$	&3.33$\times 10^{-1}$	&3.75$\times 10^{0}$		&3.40177	&-\\
			2	&288	&7.92$\times 10^6$	&5$\times 10^{-5}$	&1.67$\times 10^{-1}$	&1.88$\times 10^{0}$		&2.57744	&0.40\\
			3	&1152	&3.17$\times 10^7$	&5$\times 10^{-5}$	&8.33$\times 10^{-2}$	&9.38$\times 10^{-1}$		&1.84553	&0.48\\
			4	&4608	&1.27$\times 10^8$	&5$\times 10^{-5}$	&4.17$\times 10^{-2}$	&4.69$\times 10^{-1}$	&1.28177	&0.53\\
			5	&18432	&5.07$\times 10^8$	&5$\times 10^{-5}$	&2.08$\times 10^{-2}$ 	&2.34$\times 10^{-1}$	&0.84764 	&0.60 \\
			6	&73728	&2.03$\times 10^9$	&5$\times 10^{-5}$	&1.04$\times 10^{-2}$	&1.17$\times 10^{-1}$	&0.52654	&0.69\\
			7	&294912	&8.11$\times 10^9$	&5$\times 10^{-5}$	&5.21$\times 10^{-3}$	&5.86$\times 10^{-2}$ & 0.29243	& 0.85
			\end{tabular}
			\caption{$L^1$-error convergence table of the discrete scheme \eqref{eq:finite-volume-method} in $h$ on a non-equidistant mesh.}\label{tab:h_conv_E1}
		\end{table}
	\end{center}
		\begin{center}
		\begin{table}[h]
			\begin{tabular}{c|c|c|c|c|c|c|c}
				$\ell$	& \#$xv$-cells&	 \#$txv$-cells&		$\Delta t$	&$\alpha h$& $h$&$\text{err}^2_\ell$ &	EOC\\
				\hline
				1	& 72	&1.98$\times 10^{6} $&5$\times 10^{-5}$	&3.33$\times 10^{-1}$	&3.75$\times 10^{0}$		&0.77771	&-\\
				2	&288	&7.92$\times 10^6$	&5$\times 10^{-5}$	&1.67$\times 10^{-1}$	&1.88$\times 10^{0}$		&0.49549	&0.65\\
				3	&1152	&3.17$\times 10^7$	&5$\times 10^{-5}$	&8.33$\times 10^{-2}$	&9.38$\times 10^{-1}$		&0.31754	&0.64\\
				4	&4608	&1.27$\times 10^8$	&5$\times 10^{-5}$	&4.17$\times 10^{-2}$	&4.69$\times 10^{-1}$	&0.19797	&0.68\\
				5	&18432	&5.07$\times 10^8$	&5$\times 10^{-5}$	&2.08$\times 10^{-2}$ 	&2.34$\times 10^{-1}$	&0.11015	&0.85 \\
				6	&73728	&2.03$\times 10^9$	&5$\times 10^{-5}$	&1.04$\times 10^{-2}$	&1.17$\times 10^{-1}$	&0.05375&1.04\\
				7	&294912	&8.11$\times 10^9$	&5$\times 10^{-5}$	&5.21$\times 10^{-3}$	&5.86$\times 10^{-2}$	&0.02077	&1.37
			\end{tabular}
			\caption{Squared $L^2(Q_T)$-error convergence table of the discrete scheme \eqref{eq:finite-volume-method} in $h$ on a non-equidistant mesh.}\label{tab:h_conv_E2}
		\end{table}
	\end{center}
	\begin{minipage}{.5\textwidth}
	Let us have a closer look at the error table Table \ref{tab:h_conv_E2}. First, we see that there is a slightly worse error convergence from the second to the fourth level. This is likely due to the fact that the maximal space and velocity step $h$ is greater than or close to one. Once $h$ is sufficiently small we see that the estimated order of convergence picks up and even exceeds the analytical order of convergence in space and velocity which is $0.5$, cf. Remark \ref{rem:error_estimate}. This can be seen as well in the log-log plot Figure \ref{pic:convergence_E_h} which plots the errors of Table \ref{tab:h_conv_E2} with respect to $h$ and shows a reference triangle with slop one. If we consider the convergence in the $L^1(Q_T)$-norm in Table \ref{tab:h_conv_E1}, we see that the order of convergence is definitely below one.
\end{minipage}
		\begin{minipage}{.5\textwidth}
			\centering
		\hspace*{.2cm}\begin{tikzpicture}
		\begin{axis}[
		height=.7\textwidth,
		width=.7\textwidth,
		axis x line=bottom,
		axis y line=left,
		tick align=outside,
		tickpos=left,
		xlabel=$h$,
		xmax =1,
		ymax =1.2,
		xmin = 0.05,
		ymin = 0.02,
		xmode=log,
		ymode=log,
		]
		\addplot table {  
			0.9375		0.31754
			0.46875		0.19797
			0.234375	0.11015 	
			0.1171875	0.05375
			0.05859375	0.02077				
		};
		\addplot[mark=none] table {
			0.6		0.6
			0.1		0.1
			0.1		0.6
			0.6		0.6			
		};
		\end{axis}
		\end{tikzpicture}
		
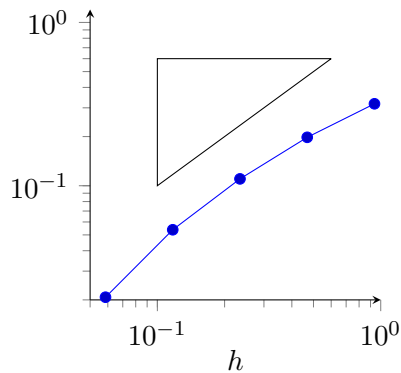
\captionof{figure}{Squared $L^2(Q_T)$-error convergence in $h$ for fixed $\Delta t$.}\label{pic:convergence_E_h}
	\end{minipage}
\subsection{Convergence in space and velocity - an equidistant mesh}
As final example, we consider an equidistant mesh in each space and velocity. In this case, by Remark \ref{rem:error_estimate}, we expect convergence of order one for $\text{err}^2_\ell$ as $h \to 0$. \\
\begin{minipage}{.5\textwidth}
		For the mesh we fix again a time step, $\Delta t = 5\times 10^{-5}$, and consider a sequence of meshes $\{\mathcal{T}_{\ell}^{xv}\}_{\ell=1}^{7}$ in space and velocity. For the spatial mesh, we will consider the same mesh as before, namely on level $\ell=1$ we start with an equidistant mesh with $\Delta x = 1/3 $. In velocity we now consider an equidistant mesh with  $\Delta v = 5/6$ for $\ell=1$ and refine by using bisection in both space and velocity. As reference solution we consider the solution computed on $\mathcal T_9^{xv}$. In our results, we see in both the $L^1(Q_T)$-norm and the squared $L^2(Q_T)$-norm an improvement in the order of convergence, cf. Table \ref{tab:h_equid_conv_E1} and Table \ref{tab:h_equid_conv_E2} (in comparison with Table \ref{tab:h_conv_E1} and Table \ref{tab:h_conv_E2}, respectively). However, in  Table \ref{tab:h_equid_conv_E2}  we see a much quicker convergence to the analytical convergence order, that is $1$, compared to the error convergence on a non-equidistant mesh, see Table \ref{tab:h_conv_E2}.
\end{minipage}
\begin{minipage}{.5\textwidth}
\centering
\hspace*{.2cm}\begin{tikzpicture}
	\begin{axis}[
		height=.7\textwidth,
		width=.7\textwidth,
		axis x line=bottom,
		axis y line=left,
		tick align=outside,
		tickpos=left,
		xlabel=$h$,
		xmax =1,
		ymax =1.2,
		xmin = 0.01,
		ymin = 0.02,
		xmode=log,
		ymode=log,
		]
		\addplot table { 
			0.83333333334		1.18741
			0.41666666667		0.78554
			0.20833333334		0.36358
			0.10416666667		0.20935
			0.05208333336	0.11281 	
			0.02604166668	0.05332
			0.01302083334	0.02052				
		};
		\addplot[mark=none] table {
			0.28		0.28
			0.04		0.04
			0.28		0.04
			0.28		0.28			
		};
	\end{axis}
\end{tikzpicture}

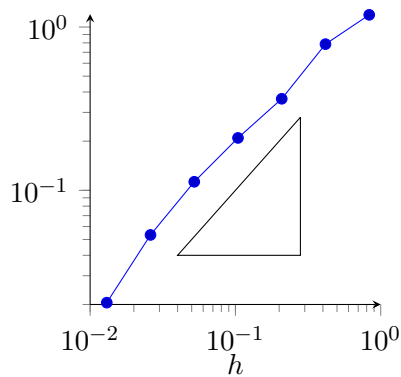
\captionof{figure}{Squared $L^2(Q_T)$-error convergence in $h$ for fixed $\Delta t$ on an equidistant mesh}\label{pic:convergence_E_h_equid}
\end{minipage}\\
		\begin{center}
	\begin{table}[h]
		\begin{tabular}{c|c|c|c|c|c|c|c}
			$\ell$	& \#$xv$-cells&	 \#$txv$-cells&		$\Delta t$	&$\alpha h$& $h$&$\text{err}^1_L$ &	EOC\\
			\hline
			1	& 72	&1.98$\times 10^{6} $&5$\times 10^{-5}$&3.33$\times 10^{-1}$  	&8.33$\times 10^{-1}$		&3.65920&-\\
			2	&288	&7.92$\times 10^6$	&5$\times 10^{-5}$	&1.67$\times 10^{-1}$	&4.17$\times 10^{-1}$	&2.77739&0.40\\
			3	&1152	&3.17$\times 10^7$	&5$\times 10^{-5}$	&8.33$\times 10^{-2}$	&2.08$\times 10^{-1}$	&2.03670&0.45\\
			4	&4608	&1.27$\times 10^8$	&5$\times 10^{-5}$	&4.17$\times 10^{-2}$	&1.04$\times 10^{-1}$	&1.37418&0.56\\
			5	&18432	&5.07$\times 10^8$	&5$\times 10^{-5}$  &2.08$\times 10^{-2}$ 	&5.21$\times 10^{-2}$	&0.86499	&0.67 \\
			6	&73728	&2.03$\times 10^9$	&5$\times 10^{-5}$	&1.04$\times 10^{-2}$	&2.60$\times 10^{-2}$	&0.52418	&0.72\\
			7	&294912	&8.11$\times 10^9$	&5$\times 10^{-5}$	&5.21$\times 10^{-3}$	&1.30$\times 10^{-2}$	&0.28511	&0.88
		\end{tabular}
		\caption{$L^1(Q_T)$-error convergence table of the discrete scheme \eqref{eq:finite-volume-method} in $h$ on an equidistant mesh}\label{tab:h_equid_conv_E1}
	\end{table}
\end{center}
\begin{center}
	\begin{table}[h]
		\begin{tabular}{c|c|c|c|c|c|c|c}
			$\ell$	& \#$xv$-cells&	 \#$txv$-cells&	$\Delta t$	&$\alpha h$& $h$&$\text{err}^2_L$ &	EOC\\
			\hline
			1	& 72	&1.98$\times 10^{6} $&5$\times 10^{-5}$&3.33$\times 10^{-1}$  	&8.33$\times 10^{-1}$			&1.18741	&-\\
			2	&288	&7.92$\times 10^6$	&5$\times 10^{-5}$	&1.67$\times 10^{-1}$	&4.17$\times 10^{-1}$	&0.78554	&0.60\\
			3	&1152	&3.17$\times 10^7$	&5$\times 10^{-5}$	&8.33$\times 10^{-2}$	&2.08$\times 10^{-1}$	&0.36358	&1.11\\
			4	&4608	&1.27$\times 10^8$	&5$\times 10^{-5}$	&4.17$\times 10^{-2}$	&1.04$\times 10^{-1}$	&0.20935	&0.80\\
			5	&18432	&5.07$\times 10^8$	&5$\times 10^{-5}$  &2.08$\times 10^{-2}$ 	&5.21$\times 10^{-2}$	&0.11281	&0.89 \\
			6	&73728	&2.03$\times 10^9$	&5$\times 10^{-5}$	&1.04$\times 10^{-2}$	&2.60$\times 10^{-2}$	&0.05332 	&1.08\\
			7	&294912	&8.11$\times 10^9$	&5$\times 10^{-5}$	&5.21$\times 10^{-3}$	&1.30$\times 10^{-2}$	&0.02052	&1.37
		\end{tabular}
		\caption{Squared $L^2(Q_T)$-error convergence table of the discrete scheme \eqref{eq:finite-volume-method} in $h$ on an equidistant mesh}\label{tab:h_equid_conv_E2}
	\end{table}
\end{center}
We observe a more stable convergence in the case of an equidistant mesh and a linear convergence $h$ in the squared $L^2(Q_T)$-norm. From our experiments we infer that the optimal order of convergence in $h$ is one. Additionally, the findings in Section~\ref{sec:Con_time} suggest an optimal order of convergence of two in $\Delta t$. Further study of this behaviour will be necessary to derive the optimal convergence result in  $\Delta t$.
	
	\section{Conclusion and outlook}
	In this paper, we have derived a finite volume scheme which preserves the mass conservation, positivity and boundedness of convex functionals. Additionally, we showed in Theorem \ref{th:convergence} that the $L^\infty(Q_T)$ weak-$*$ limit $(f,g)$ of our approximation $(f_h,g_h)$ is also the weak solution of the original system (\ref{eq:main_system}), as $h\to 0$. Subsequently, we established a convergence estimate in the squared $L^2(Q_T)$-norm and had a look at the corresponding convergence in our numerical investigation. In the case of equidistant discretisation in space and velocity,  we observed a convergence order of one as predicted in Remark \ref{rem:error_estimate}. 
	In addition, our experiments displayed a  convergence order of two in time suggesting the convergence order results in Remark \ref{rem:error_estimate} may be improved with respect to $\Delta t$.
	
	Our scheme acts as a solid foundation for further studies including models exhibiting damping. It is well-known that an effective macroscopic system is obtained in the overdamped limit which matches the system studied in \cite{difrancesco_fagioli}. Therefore, it would be of great interest to study whether or not the scheme is asymptotic preserving, i.e., stable with respect to the damping parameter. Another challenging, open problem is to investigate analytically if the time convergence order can be improved.

\section*{Acknowledgements}
JIMH acknowledges funding within the Postdoc Starter Kit by the Graduate Academy of Dresden University of Technology financed by the Federal Ministry of Education and Research (BMBF) and the State of Saxony under the Excellence Strategy of the German federal and state governments. VI would like to acknowledge the invitation to Dresden where parts of this project were established. The research of VI is supported by the Italian INdAM project N. E53C22001930001 ``MMEAN-FIELDS.''

\bibliographystyle{plain}
\bibliography{literature}
\end{document}